\definecolor{codegreen}{rgb}{0,0.6,0}
\definecolor{codegray}{rgb}{0.5,0.5,0.5}
\definecolor{codepurple}{rgb}{0.58,0,0.82}
\definecolor{backcolour}{rgb}{0.95,0.95,0.92}
  \renewenvironment{thebibliography}[1]{
    \begin{oldthebibliography}{#1}
      \setlength{\parskip}{0ex}
      \setlength{\itemsep}{0ex}
  }
  {
    \end{oldthebibliography}
  }
\lstdefinestyle{mystyle}{
	backgroundcolor=\color{backcolour},   
	commentstyle=\color{codegreen},
	keywordstyle=\color{magenta},
	numberstyle=\tiny\color{codegray},
	stringstyle=\color{codepurple},
	basicstyle=\ttfamily\footnotesize,
	breakatwhitespace=false,         
	breaklines=true,                 
	captionpos=b,                    
	keepspaces=true,                 
	numbers=left,                    
	numbersep=5pt,                  
	showspaces=false,                
	showstringspaces=false,
	showtabs=false,                  
	tabsize=2
}
\theoremstyle{plain}
\newtheorem{thm}{Theorem}[section]
\newtheorem{prop}[thm]{Proposition}
\newtheorem{lemma}[thm]{Lemma}
\theoremstyle{definition}
\newtheorem{defin}[thm]{Definition}
\newtheorem{rmk}[thm]{Remark}
\newtheorem{conj}[thm]{Conjecture}
\newtheorem{question}[thm]{Question}
\begin{document}

	\title{A note on non-regular Bonnet-Myers Sharp Graphs }

    \author[Cushing]{David Cushing}
    \address{Department of Mathematics, University of Manchester, Manchester, Great Britain}
    \email{david.cushing@manchester.ac.uk}
    \author[Stone]{Adam J. Stone}
    \address{Department of Mathematical Sciences, Durham University, Durham, Great Britain}
    \email{adam.stone2@durham.ac.uk}

	\date{\today}
	
	\begin{abstract}
Self-centred regular graphs which are Ollivier-Ricci Bonnet-Myers sharp have been completely classified. When the conditions of self-centeredness and regularity are removed it is an open problem on what the classification is. We present a complete classification of Bonnet-Myers sharp graphs with a diameter 2 and show that there exists Bonnet-Myers sharp graphs of diameter 3, 4 and 6 that belong to a family of graphs called symmetrical antitees.
	\end{abstract}
	
	\maketitle
	
	
 \section{Introduction}
    A fundamental question in geometry is in which way local properties determine the global structure of a space. A famous result of this kind is the Bonnet-Myers Theorem \cite{My41} for complete $n$-dimensional Riemannian manifolds $M$ with $K = \inf {\rm Ric}_M(v) > 0$, where the infimum is taken over all unit tangent vectors $v$ of $M$. Under this condition, $M$ is compact and its diameter satisfies
\begin{equation} \label{eq:BM_RG_ineq}
{\rm diam}(M) \le \pi \sqrt{\frac{n-1}{K}}. 
\end{equation}
Moreover, Cheng's Rigidity Theorem \cite{Cheng75} states that this diameter estimate (\ref{eq:BM_RG_ineq}) has equality if and only if $M$ is the $n$-dimensional round sphere.
\\
In the setting of graphs, there is a discrete analogue of Bonnet-Myers theorem for Ollivier-Ricci curvature (see, e.g., \cite{Ollivier, LLY}) which states that for a graph $G$ with positive curvature lower bound $K = \inf_{x\sim y} \kappa_{LLY}(x, y) > 0$, its diameter satisfies an upper bound
\begin{equation} \label{eq:BM_Intro}
{\rm diam}(G) \le \frac{2}{K}. 
\end{equation}
Here $\kappa_{LLY}$ is the Ollivier-Ricci curvature, introduced in \cite{LLY} and modified from \cite{Ollivier}. This notion is introduced in Section $2.2.$ A natural question is which graphs satisfy inequality $(2)$ with equality, such graphs are called {\it Bonnet-Myers sharp.}
\\
In \cite{CushR} the authors classify all Bonnet-Myers sharp graphs under the assumptions of regularity and self-centeredness:

\begin{thm}[see \cite{CushR}]\label{thm:MainRigidityThm}
  Self-centered regular Bonnet-Myers sharp graphs are precisely the following
  graphs:
  \begin{enumerate}
  \item hypercubes $Q^n$, $n \ge 1$;
  \item cocktail party graphs $CP(n)$, $n \ge 3$; 
  \item the Johnson graphs $J(2n,n)$, $n \ge 3$;
  \item even-dimensional demi-cubes $Q^{2n}_{(2)}$, $n \ge 3$; 
  \item the Gosset graph;
  \end{enumerate}
  and Cartesian products $G = G_{1}\times \cdots\times G_k$ where each $G_i$ are one of (1)-(5) satisfying the condition
  $$\frac{{\rm deg}(G_1)}{{\rm diam}(G_1)} = \ldots = \frac{{\rm deg}(G_k)}{{\rm diam}(G_k)}.$$
\end{thm}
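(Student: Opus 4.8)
The plan is to argue in two directions and to reduce the harder (``only if'') direction to a classification of the \emph{prime} examples, i.e.\ those admitting no nontrivial Cartesian factorization.

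\textbf{Sufficiency and the Cartesian-product reduction.} First I would check directly that each of (1)--(5) is regular, self-centered, and Bonnet-Myers sharp: each is edge-transitive, so it suffices to compute $\kappa_{LLY}$ on a single edge by exhibiting an explicit optimal transport plan between the (appropriately idle) one-step random walks at its two endpoints, adapted to the combinatorial structure of the family, and to verify that the resulting common curvature $K$ equals $2/{\rm diam}$ --- a finite verification for the Gosset graph and a uniform one for the four infinite families. For Cartesian products I would use the tensorization identity: if $e$ lies in the $G_i$-direction of $G = G_1\times\cdots\times G_k$ then $\kappa_{LLY}^G(e) = \frac{\degg(G_i)}{\degg(G)}\,\kappa_{LLY}^{G_i}(e)$, while ${\rm diam}(G) = \sum_i {\rm diam}(G_i)$ and $\degg(G) = \sum_i \degg(G_i)$. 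Writing $K_i := \frac{\degg(G)}{\degg(G_i)}K$, the Bonnet-Myers inequality ${\rm diam}(G_i)\le 2/K_i$ holds for each factor and $\sum_i 2/K_i = 2/K$; hence if $G$ is Bonnet-Myers sharp the ``budget'' $\sum_i {\rm diam}(G_i) = 2/K$ is saturated, forcing equality in every factor. Conversely, if each $G_i$ is Bonnet-Myers sharp then all edges of $G$ have curvature $\frac{2}{\degg(G)}\cdot\frac{\degg(G_i)}{{\rm diam}(G_i)}$, which is independent of $i$ exactly when the ratios $\degg(G_i)/{\rm diam}(G_i)$ coincide, and then ${\rm diam}(G)=2/K$. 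This yields the ``if'' direction and reduces ``only if'' to classifying prime Bonnet-Myers sharp graphs, which one expects to be precisely $K_2$, $CP(n)$ for $n\ge 3$, $J(2n,n)$ for $n\ge 3$, $Q^{2n}_{(2)}$ for $n\ge 3$, and the Gosset graph.

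\textbf{Necessity, structural step.} Let $G$ be a prime regular self-centered Bonnet-Myers sharp graph of diameter $L$, so $K = 2/L$. Using self-centeredness, each vertex $x$ has an \emph{antipole} $\bar x$ with $d(x,\bar x) = L$. Fixing a pole $x_0$ and a geodesic $x_0 \sim x_1 \sim \cdots \sim x_L = \bar x_0$, I would exploit the fact that $\kappa_{LLY}(x_j, x_{j+1}) \ge K$ is an \emph{equality} at every edge via the primal/dual structure of the underlying optimal-transport linear program: tightness should force the transport plan together with an optimal dual potential to be essentially unique, and unwinding this yields perfect matchings between the relevant portions of consecutive distance spheres $S_j(x_0)$ and $S_{j+1}(x_0)$, constant sphere sizes, and constant intersection numbers along geodesics. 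The aim of this step is to upgrade such local rigidity to global regularity: that $G$ is distance-regular with an intersection array of ``classical'' type, tightly constrained because $\degg/L$ is the same at every pole.

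\textbf{Necessity, classification step, and the main obstacle.} Given the intersection-array constraints, I would finish either by appealing to the known classification of distance-regular graphs with classical parameters and small valency-to-diameter ratio, or by a self-contained induction: if $G$ has a $K_2$ Cartesian factor it is split off, and otherwise the rigid local structure at a pole pins $G$ down to one of $CP(n)$, $J(2n,n)$, $Q^{2n}_{(2)}$, or the Gosset graph. The principal difficulty is precisely this bridge --- converting the analytic equality condition for $\kappa_{LLY}$, a statement about an optimal transport problem, into hard combinatorial invariants, and then excluding every sporadic near-miss; the presence of the Gosset graph, with no accompanying infinite family, shows the endgame cannot be purely asymptotic and must either invoke a complete distance-regular classification or establish rigidity of the pole-to-antipole interval directly. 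I expect the regularity and self-centeredness hypotheses to enter decisively here, guaranteeing the antipodal involution and the homogeneity of sphere sizes that make the matchings from the structural step global rather than merely local.
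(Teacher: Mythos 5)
This theorem is quoted verbatim from \cite{CushR}; the present paper contains no proof of it, so there is no internal argument to compare yours against. Judged on its own terms, your proposal is essentially correct on the easy half: the curvature verification for the five families, the tensorization identity $\kappa_{LLY}^G(e)=\frac{\degg(G_i)}{\degg(G)}\,\kappa_{LLY}^{G_i}(e)$ for edges in the $G_i$-direction, and the ``budget'' argument ($\sum_i {\rm diam}(G_i)\le \sum_i 2/K_i = 2/K$, with sharpness forcing equality in each summand and hence sharpness of every factor together with equality of the ratios $\degg(G_i)/{\rm diam}(G_i)$) are all sound and are essentially how the forward direction and the product reduction are handled in \cite{CushR}.

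The necessity direction, however, is where the entire content of the theorem lives, and your proposal leaves it as a declared intention rather than an argument: you name the obstacle (converting tightness in the transport linear program into combinatorial rigidity) but do not supply the bridge. For the record, the route in \cite{CushR} is not via distance-regularity with classical parameters. They show that a self-centered regular Bonnet--Myers sharp graph is \emph{strongly spherical} --- every interval between a vertex and its antipole is an antipodal subgraph --- by a chain of lemmas about transport geodesics and an explicit optimal Kantorovich potential built from the distance to the antipole, and then invoke a classification of strongly spherical graphs (established in an appendix of that paper), which are exactly the Cartesian products of hypercubes, cocktail party graphs, Johnson graphs $J(2n,n)$, even-dimensional demi-cubes and the Gosset graph. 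Your ``prime factor'' bookkeeping would also need care, since $Q^n$ is itself a Cartesian power of $K_2$ and the degree/diameter ratio condition must be checked against that decomposition. As written, the proposal is a plausible research plan with the decisive step missing, not a proof.
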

See section $2.1$ for the relevant graph theoretical notions.

The authors also showed that the self-centeredness assumption is not necessary for regular Bonnet-Myers sharp graphs of diameter 2:
\begin{thm}[see \cite{CushR}]\label{thm:CPRigidityThm}
Let $G$ be a regular graph of diameter $2.$ The following are equivalent
  \begin{enumerate}
  \item $G$ is Bonnet-Myers sharp.
  \item $G$ is isomorphic to a cocktail party graphs $CP(n)$ for some $n \ge 2.$ 
  \end{enumerate}
\end{thm}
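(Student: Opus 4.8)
I will prove the two implications separately; the content is in $(1)\Rightarrow(2)$.

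\emph{Direction $(2)\Rightarrow(1)$.} The cocktail party graph $CP(n)=K_{2n}$ minus a perfect matching is $(2n-2)$-regular of diameter $2$: for $n=2$ it is the hypercube $Q^2$ and for $n\ge 3$ it is on the list of Theorem~\ref{thm:MainRigidityThm}, so in every case it is Bonnet--Myers sharp. (Directly: on any edge $xy$ the two $\tfrac{1}{2n-1}$-lazy random-walk measures are uniform on $V$ with one vertex removed and differ only by transporting mass $\tfrac1{2n-1}$ along a single edge, so $\kappa_{LLY}(x,y)=1$ and $\operatorname{diam}=2=2/K$.) For $(1)\Rightarrow(2)$, write $d=\deg(G)$. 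Since $\operatorname{diam}(G)=2$, Bonnet--Myers sharpness means $K=\inf_{x\sim y}\kappa_{LLY}(x,y)=1$, so $\kappa_{LLY}(x,y)\ge 1$ on every edge. Using the identity $\kappa_{LLY}=\tfrac{d+1}{d}\kappa_{1/(d+1)}$ valid on $d$-regular graphs (linearity of the idleness function on $[\tfrac1{d+1},1]$), this is equivalent to $W_1(\mu_x,\mu_y)\le\tfrac1{d+1}$ for every edge $xy$, where $\mu_x$ is the uniform probability measure on $B_1(x)=N[x]$. Also $d\ge 2$, $G$ is connected, and no vertex is universal (else $G=K_{d+1}$, of diameter $1$), so every vertex has a neighbour of $G$ at distance exactly $2$ and in particular a non-neighbour.

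\emph{Local dichotomy.} The next step is to read off the local structure of $G$. A transport lower bound gives $|B_1(x)\setminus B_1(y)|\le 1$ on every edge; since $\operatorname{diam}(G)=2$ we have $B_1(x)\cap B_1(y)=\{x,y\}\cup(N(x)\cap N(y))$, so $\#(x,y):=|N(x)\cap N(y)|\ge d-2$. As $\#(x,y)\le d-1$ too, exactly one of the following holds for each edge $xy$: either $\#(x,y)=d-1$, and then $N[x]=N[y]$; or $\#(x,y)=d-2$, and then $\mu_x-\mu_y=\tfrac1{d+1}(\delta_u-\delta_w)$ for the unique vertices $u\in N(x)\setminus N[y]$ and $w\in N(y)\setminus N[x]$, whence $W_1\le\tfrac1{d+1}$ forces $d(u,w)=1$, i.e.\ $u\sim w$.

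\emph{Global step.} Fix a vertex $a$ with a non-neighbour $b$, and pick (using diameter $2$) a common neighbour $c$. The edge $ac$ is not of the first type above, because $b\in N[c]\setminus N[a]$; likewise $bc$ is of the second type. Applying the second-type structure to $ac$ and to $bc$ gives $N(c)\subseteq(N(a)\cap N(b))\cup\{a,b\}$, hence $|N(a)\cap N(b)|\ge d-1$. Now count the $2$-paths from $a$ to its non-neighbours in two ways:
\[ \sum_{\substack{b\ne a,\ b\not\sim a}}|N(a)\cap N(b)|\;=\;\sum_{c\sim a}|N(c)\setminus N[a]|\;\le\;d, \]
the last inequality because every edge $ac$ is of one of the two types, so $|N(c)\setminus N[a]|\in\{0,1\}$. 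If $k$ is the number of non-neighbours of $a$ (so $k\ge 1$), the left side is at least $k(d-1)$, giving $k(d-1)\le d$, which forces $k=1$ as soon as $d\ge 3$. Thus $\overline G$ is $1$-regular, i.e.\ a perfect matching; writing $|V(G)|=2n$ we conclude $G=K_{2n}$ minus a perfect matching $=CP(n)$. The remaining case $d=2$ is handled by hand: the only $2$-regular graphs of diameter $2$ are $C_4=CP(2)$ and $C_5$, and $\kappa_{LLY}\equiv\tfrac12$ on $C_5$, so $C_5$ is not Bonnet--Myers sharp.

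\emph{Main obstacle.} Translating the curvature hypothesis into the local dichotomy is routine once the idleness identity is available. The real work is the global step, and its crux is obtaining the \emph{sharpened} bound $|N(a)\cap N(b)|\ge d-1$ for non-adjacent $a,b$ (not merely the easy $\ge d-2$), which is exactly what makes the counting argument close and pins down that every vertex has a unique non-neighbour.
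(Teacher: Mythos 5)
The paper states Theorem~\ref{thm:CPRigidityThm} without proof, quoting it from \cite{CushR}, so there is no in-paper argument to compare against; your proof must stand on its own. The overall strategy is sound and close in spirit to the paper's treatment of the non-regular case: your ``local dichotomy'' is exactly Lemma~\ref{lemma:SameDegree} specialised to regular graphs, obtained by the same uniform-measure transport bound, and the reverse implication and the $d=2$ hand check are fine. The one step that is not justified as written is precisely the inequality you yourself identify as the crux, $|N(a)\cap N(b)|\ge d-1$. From the containment $N(c)\subseteq (N(a)\cap N(b))\cup\{a,b\}$ alone you only get $|N(a)\cap N(b)|\ge |N(c)|-2=d-2$, since $a,b\in N(c)$ but $a,b\notin N(a)\cap N(b)$; and with only $d-2$ the count $k(d-2)\le d$ fails to force $k=1$ for $d\in\{3,4\}$. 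The bound $d-1$ is nevertheless true and provable from what you have already established: the second-type structure of the edge $ac$ supplies a unique vertex $u\in N(a)\setminus N[c]$ adjacent to the unique vertex of $N(c)\setminus N[a]$, which is $b$; hence $u\in N(a)\cap N(b)$ while $u\notin N(c)$, so $u$ is a common neighbour of $a$ and $b$ not among the $d-2$ vertices of $N(c)\setminus\{a,b\}$, giving $|N(a)\cap N(b)|\ge (d-2)+1=d-1$. With that sentence added, the double count closes and the proof is complete.
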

    
    When the regularity condition in the above theorem is dropped, a larger set of graphs are Bonnet-Myers sharp. Thus, the aim of this paper is to extend this classification result to the case of not necessarily regular diameter 2 graphs. 
    \\
    Our main result is the following classification of diameter 2 Bonnet-Myers sharp graphs:
    \begin{thm}\label{thm:2RigidityThm}
    Let G be a graph with diameter 2 on $n$ vertices. The following are equivalent:
    \begin{enumerate}
        \item 
        $G$ is Bonnet-Myers sharp.
        \item
        $G$ can be obtained by deleting a matching from the complete graph on $n$ vertices $K_n.$
    \end{enumerate}
    \end{thm}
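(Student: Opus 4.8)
\emph{Setup and tools.} Since ${\rm diam}(G)=2$, the Ollivier--Ricci Bonnet--Myers inequality ${\rm diam}(G)\le 2/K$ reads $2\le 2/K$, so $K=\inf_{x\sim y}\kappa_{LLY}(x,y)\le 1$ is automatic, and $G$ is Bonnet--Myers sharp exactly when $K=1$; as $G$ is finite this means $\kappa_{LLY}(x,y)\ge 1$ for every edge $xy$, with equality on at least one edge. I will use two curvature facts. (i) For an edge $xy$, $\kappa_{LLY}(x,y)\ge 1$ is equivalent to $W_1(\mu^p_x,\mu^p_y)\le p$ for $p$ in a left-neighbourhood of $1$, so a suitable explicit coupling of $\mu^p_x$ and $\mu^p_y$ certifies $\kappa_{LLY}(x,y)\ge 1$. (ii) The upper bound $\kappa_{LLY}(x,y)\le\dfrac{\#_\triangle(x,y)+2}{\max(\deg x,\deg y)}$, where $\#_\triangle(x,y)=|N(x)\cap N(y)|$; one may cite this or derive it at once by feeding the $1$-Lipschitz function $z\mapsto 1-\min(d(w,z),2)$, with $w$ the endpoint of smaller degree, into the Kantorovich dual of $W_1(\mu^p_x,\mu^p_y)$ and letting $p\to 1$.

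\emph{The direction $(2)\Rightarrow(1)$.} Write $G=K_n\setminus M$ with $M$ a matching; since ${\rm diam}(G)=2$ forces $M\ne\varnothing$ and $n\ge 3$, $G$ is connected of diameter $2$ (the endpoints of an edge of $M$ have a common neighbour as $n\ge 3$). For an edge $xy$ the signed measure $\mu^p_x-\mu^p_y$ is concentrated on $x$, $y$, the $M$-partner of each $M$-saturated endpoint, and — only when exactly one of $x,y$ is saturated — the common neighbours of $x,y$, which then carry a tiny positive excess; because $K_n\setminus M$ is so dense these all lie within distance $2$ and the obvious transport plan has cost exactly $p$, so $W_1(\mu^p_x,\mu^p_y)\le p$ and hence $\kappa_{LLY}(x,y)\ge 1$ by fact (i). For the matching upper bound: if at least one endpoint is $M$-saturated then $\#_\triangle(x,y)+2=\max(\deg x,\deg y)$, so fact (ii) gives $\kappa_{LLY}(x,y)\le 1$, whence $\kappa_{LLY}(x,y)=1$; if neither endpoint is saturated one gets $\kappa_{LLY}(x,y)=\tfrac{n}{n-1}>1$. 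Since $M\ne\varnothing$ and $n\ge 3$ there is a saturated-endpoint edge, so $K=1$ and ${\rm diam}(G)=2=2/K$: $G$ is Bonnet--Myers sharp.

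\emph{The direction $(1)\Rightarrow(2)$.} Assume $G$ has diameter $2$ and $\kappa_{LLY}(x,y)\ge 1$ on every edge; with fact (ii) this forces $\#_\triangle(x,y)\ge\max(\deg x,\deg y)-2$, i.e., for every edge $xy$ one has $|N(x)\setminus N[y]|\le 1$ and $|N(y)\setminus N[x]|\le 1$, where $N[\cdot]$ is the closed neighbourhood; call this property $(\ast)$. It suffices to show every vertex has degree $\ge n-2$: then $\overline G$ has maximum degree $\le 1$, so $\overline G$ is a matching $M$ and $G=K_n\setminus M$, with $M\ne\varnothing$ since ${\rm diam}(G)=2$. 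Suppose instead that $v$ has $t:=n-1-\deg v\ge 2$ non-neighbours, forming a set $A$; put $B:=N(v)$. For each $a\in A$, diameter $2$ makes $C_a:=N(v)\cap N(a)$ non-empty; $(\ast)$ at an edge $vw$ ($w\in B$) shows each $w\in B$ has at most one neighbour in $A$, so the $C_a$ are pairwise disjoint; $(\ast)$ at an edge $aw$ with $w\in C_a$ shows $N(w)\subseteq N[a]\cup\{v\}$, which forces $w\not\sim w'$ whenever $w\in C_a$, $w'\in C_{a'}$, $a\ne a'$, and also $w\not\sim u$ for every $u\in B$ with no neighbour in $A$. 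If $t\ge 3$ this produces $w\in C_{a_1}$ with two non-neighbours $w_2\in C_{a_2}$, $w_3\in C_{a_3}$ inside $B$, contradicting $(\ast)$ at $vw$; so $t=2$. The same reasoning then forces $|C_{a_1}|=|C_{a_2}|=1$, say $C_{a_i}=\{w_i\}$, and shows $w_1$ is non-adjacent to $w_2$ and to all of $B\setminus\{w_1,w_2\}$; applying $(\ast)$ at $vw_1$ gives $\deg v=|B|\le 2$, hence $\deg v=2$, $n=5$, and chasing the adjacencies (using diameter $2$ to force $a_1\sim a_2$) identifies $G$ with $C_5$. But $\kappa_{LLY}(C_5)=\tfrac12<1$ (a short Wasserstein computation, or a known value), contradicting $K=1$. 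Hence no such $v$ exists, completing the proof.

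\emph{Expected main difficulty.} The crux is the rigidity step in $(1)\Rightarrow(2)$: the combinatorial condition $(\ast)$ has to be pushed, through the case split on $t$, all the way to the rigid conclusion, and since $(\ast)$ only yields $\kappa_{LLY}\le 1$ — not a strict inequality — on the residual case $C_5$, that case must be excluded separately via the actual value $\kappa_{LLY}(C_5)=\tfrac12$. The forward curvature computations in $(2)\Rightarrow(1)$, while requiring a little care across the three edge types, are comparatively routine once facts (i) and (ii) are in place.
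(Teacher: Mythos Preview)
Your proof is correct, but it follows a genuinely different route from the paper's in the rigidity direction $(1)\Rightarrow(2)$.

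The paper first proves a \emph{same-degree} lemma (at most one private neighbour, which then sits in a $4$-cycle) and a strictly stronger \emph{different-degree} lemma (the smaller-degree endpoint has \emph{no} private neighbour at all), both obtained by looking at the mass that must leave a private neighbour in an optimal transport plan. It then splits into the regular case, which is dispatched by citing the earlier cocktail-party classification, and the non-regular case, where the asymmetric lemma forces adjacent degrees to differ by exactly one and quickly pins down the $G(a,b)$ structure.

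You instead extract only the symmetric consequence $(\ast)$---at most one private neighbour on \emph{each} side---directly from the triangle upper bound $\kappa_{LLY}(x,y)\le(\#_\triangle(x,y)+2)/\max(d_x,d_y)$ via a single Kantorovich potential, and then run a purely combinatorial argument on a putative vertex $v$ with $t\ge 2$ non-neighbours. This is self-contained (no appeal to the regular classification) and arguably cleaner, but because $(\ast)$ is weaker than the paper's asymmetric lemma, your case analysis does not collapse immediately: it leaves the residual $t=2$, $n=5$ configuration $C_5$, which you must eliminate by the separate computation $\kappa_{LLY}(C_5)=\tfrac12$. So the trade-off is: the paper invests in a sharper transport lemma exploiting the degree imbalance and thereby avoids any residual case, while your approach uses a coarser but uniformly applicable curvature bound at the cost of one explicit leftover check.

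In $(2)\Rightarrow(1)$ both arguments are essentially the same curvature computation; your ``obvious transport plan has cost exactly $p$'' is correct in all three edge types, though in the mixed case (one saturated endpoint) a unit of mass does travel distance $2$ and the bookkeeping that the total still equals $p$ deserves one explicit line.
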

    The proof of Theorem \ref{thm:2RigidityThm} follows from Proposition \ref{prop:IsBMSharp} and Theorem \ref{thm:classification} proved below.

    We also introduce a family of symmetrical antitrees and prove the following theorem:
    \begin{thm}
         A Symmetrical antitree $G=\mathcal{AT}((a_k))$ is Bonnet-Myers Sharp if and only if all radial edges have curvature equal to $2/diam(G)$.
    \end{thm}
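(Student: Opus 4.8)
The plan is to use the following reformulation of Bonnet-Myers sharpness: because the inequality (\ref{eq:BM_Intro}) holds for every graph with $K:=\inf_{x\sim y}\kappa_{LLY}(x,y)>0$, a graph $G$ is Bonnet-Myers sharp precisely when $K=2/\mathrm{diam}(G)$, equivalently when every edge satisfies $\kappa_{LLY}\ge 2/\mathrm{diam}(G)$ with equality attained on at least one edge. Write $D=\mathrm{diam}(G)$ and let $S_0,\dots,S_D$ be the layers of the symmetrical antitree $G=\mathcal{AT}((a_k))$, so that consecutive layers span complete bipartite graphs while $a_0=a_D=1$ and $a_k=a_{D-k}$; call the unique vertices of $S_0$ and $S_D$ the poles.

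First I would record that $\mathrm{diam}(G)=D$, that the two poles lie at distance $D$, and that \emph{every} radial edge lies on a geodesic of length $D$ joining the poles. Since the layer index of a vertex changes by at most $1$ along each edge, a vertex of $S_i$ is at distance exactly $i$ from the pole of $S_0$, so the poles realise distance $D$; a similar path argument, using that consecutive layers are completely joined, gives $d(u,v)\le D$ for all $u,v$, hence $\mathrm{diam}(G)=D$. If $\{u,v\}$ is radial with $u\in S_k$ and $v\in S_{k+1}$, then choosing any $s_i\in S_i$ for $i\notin\{k,k+1\}$ (forced to be the poles when $i\in\{0,D\}$) yields a path $s_0\sim\cdots\sim s_{k-1}\sim u\sim v\sim s_{k+2}\sim\cdots\sim s_D$ of length $D$, which therefore realises the diameter.

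For the direction ``Bonnet-Myers sharp $\Rightarrow$ every radial edge has curvature $2/D$'': assuming sharpness we have $K=2/D$, and diameter-realising geodesics in a Bonnet-Myers sharp graph are rigid, meaning that along any geodesic $x_0\sim x_1\sim\cdots\sim x_D$ the chain of inequalities proving (\ref{eq:BM_Intro}) (a lower bound coming from the supports of the lazy random walks, an upper bound from subadditivity of the transportation distance together with the definition of $\kappa_{LLY}$) collapses to equalities, forcing $\kappa_{LLY}(x_i,x_{i+1})=K$ for every $i$; this rigidity is part of the structure theory in \cite{CushR}. Applying it to the geodesic produced in the previous step shows every radial edge of $G$ carries curvature exactly $K=2/D$.

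For the converse it then remains only to show that, if the antitree has any spherical (non-radial) edges, each of them has curvature at least $2/D$; granting this, $K=2/D>0$ and equality in (\ref{eq:BM_Intro}) gives sharpness. A spherical edge $\{u,v\}$ lies inside one layer $S_k$, which is fully joined to $S_{k-1}$ and $S_{k+1}$, so $u$ and $v$ have a large common neighbourhood; exhibiting an explicit transport plan between the one-step random-walk measures at $u$ and at $v$ (moving mass through those common neighbours and within $S_k$), or quoting the curvature formulas computed for symmetrical antitrees earlier in the paper, should show that such edges are at least as positively curved as the radial ones. I expect this last point to be the main obstacle: spherical edges are not pinned down by a diameter geodesic, so they cannot be handled by the soft rigidity argument and instead call for an honest, if routine, coupling estimate tuned to the antitree's combinatorics — whereas the ``$\Rightarrow$'' direction reduces cleanly to geodesic rigidity in Bonnet-Myers sharp graphs.
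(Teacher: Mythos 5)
Your proof is correct, but your forward direction takes a genuinely different route from the paper's. The paper proves ``sharp $\Rightarrow$ all radial curvatures equal $2/L$'' by manipulating the explicit antitree curvature formulas of Theorem~\ref{thm:AntitreeCurvatures}: the conditions $\kappa_{LLY}\ge 2/L$ on the inner radial edges, propagated inductively from the middle layer outwards using the symmetry $a_k=a_{L+2-k}$, produce a lower bound on $a_3$ in terms of $a_2$, while the radial root edge produces the matching upper bound, and the resulting squeeze forces equality at every step of the induction. You instead observe that every radial edge lies on a geodesic joining the two singleton end-layers (which realize the diameter, since the layer index changes by at most one along any edge), and that along any diameter-realizing geodesic $x_0\sim\cdots\sim x_L$ the telescoping proof of \eqref{eq:BM_Intro} gives $\sum_{i=0}^{L-1}\kappa_{LLY}(x_i,x_{i+1})\le 2$, so sharpness ($\kappa_{LLY}\ge 2/L$ on each of the $L$ edges) forces each summand to equal $2/L$. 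This is cleaner, treats root and inner radial edges uniformly, and is not tied to the parity restriction (odd $N$, even diameter) under which the paper actually carries out its induction; the only detail to make explicit is fixing a single idleness $p\ge 1/(1+\max_v d_v)$ so that Theorem~\ref{thm:idleness} converts $W_1$ into $\kappa_{LLY}$ on every edge of the geodesic simultaneously, and the rigidity statement you appeal to is indeed established in \cite{CushR}. For the converse the two proofs coincide: by \eqref{Spherical} every spherical edge has curvature $\tfrac{a_{k-1}+a_k+a_{k+1}}{a_{k-1}+a_k+a_{k+1}-1}>1\ge 2/L$, so the infimum of the curvature is $2/L$, attained on the radial edges, and the graph is Bonnet--Myers sharp. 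Note that this converse step, which you flag as the likely main obstacle requiring a bespoke coupling estimate, is in fact the trivial half of the argument once Theorem~\ref{thm:AntitreeCurvatures} is quoted.
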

    This theorem follows from the antitree equations formulated in \cite{cushing2018curvature} and is used to show that
    \begin{lemma}
        There are an infinite number of symmetrical antitrees of diameter 4 or 6 that are Bonnet-Myers Sharp.
    \end{lemma}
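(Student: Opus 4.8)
The plan is to reduce Bonnet--Myers sharpness of a symmetrical antitree to an explicit system of equations in its sphere sizes, using the previous theorem, and then to write down one-parameter families of positive-integer solutions for $d = 4$ and $d = 6$. Recall that a symmetrical antitree $G = \mathcal{AT}((a_k))$ of diameter $d$ is built from a palindromic sequence of sphere sizes $(a_0, a_1, \dots, a_d)$ (so $a_k = a_{d-k}$, with $a_0 = a_d = 1$) by joining each vertex of the sphere $S_k$ to every vertex of $S_{k+1}$. The antitree equations of \cite{cushing2018curvature} express the Ollivier--Ricci curvature $\kappa_{LLY}(e)$ of a radial edge $e$ joining $S_k$ to $S_{k+1}$ as an explicit rational function of the nearby sphere sizes $a_{k-1}, a_k, a_{k+1}, a_{k+2}$, obtained by identifying the $W_1$-optimal coupling between the relevant probability measures. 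By the previous theorem, $G$ is Bonnet--Myers sharp if and only if every such curvature equals $2/d$; using the palindromic symmetry, this is a system of $\lceil d/2 \rceil$ equations in the $\lfloor d/2 \rfloor$ free parameters $a_1, \dots, a_{\lfloor d/2 \rfloor}$.

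For $d = 4$ I would set $(a_k) = (1, a, b, a, 1)$, so that the system consists of the equation $\Phi_0(a, b) = 1/2$ coming from the pole edges $S_0$--$S_1$ and the equation $\Phi_1(a, b) = 1/2$ coming from the inner edges $S_1$--$S_2$. The key point --- and the reason one obtains an infinite family here but (at most) finitely many examples in diameters $3$ and $5$ --- is that these two equations are not independent: I expect the pole equation to be a formal consequence of the inner one, reflecting both the special shape of the antitree equations at a pole and the general principle that in a Bonnet--Myers sharp graph the curvature near a pole is pinned down by the rest of the graph. Granting this, $\Phi_1(a, b) = 1/2$ expresses $b$ as an explicit function of $a$, and one checks that this produces a positive integer $b$ for all $a$ in a suitable infinite arithmetic progression; each resulting pair $(a, b)$ gives a Bonnet--Myers sharp symmetrical antitree of diameter $4$, and distinct values of $a$ give non-isomorphic graphs. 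For $d = 6$ I would argue in the same way with $(a_k) = (1, a_1, a_2, a_3, a_2, a_1, 1)$: there are three equations in three unknowns, one of which (again the pole equation) should be redundant, so the remaining two cut out a curve along which I would exhibit infinitely many positive-integer points.

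The main obstacle is establishing that the system is genuinely under-determined, that is, that the pole equation follows from the others; this is precisely what separates diameters $4$ and $6$ (infinitely many examples) from the over-determined cases of diameters $3$ and $5$, and it is where the structure of the antitree equations really has to be used. A secondary, more technical difficulty is that the antitree equations are valid only in the range of sphere sizes for which a particular coupling is $W_1$-optimal, so along each candidate family one must check that the parameters stay in that regime --- in particular that the required inequalities between successive $a_k$ are preserved --- and that the forced values of the remaining sphere sizes are positive integers for infinitely many values of the parameter. Once these verifications are carried out, the two infinite families of diameter $4$ and diameter $6$ Bonnet--Myers sharp symmetrical antitrees are exhibited, proving the lemma.
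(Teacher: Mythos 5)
Your plan is the same as the paper's: invoke the preceding theorem to reduce Bonnet--Myers sharpness to the condition that every radial edge has curvature $2/\mathrm{diam}(G)$, write down the resulting system in the sphere sizes via the antitree formulas, and observe that for diameters $4$ and $6$ the system has a one-parameter family of positive-integer solutions. However, you leave the decisive step --- that the system really is under-determined --- as an expectation (``I expect the pole equation to be a formal consequence of the inner one''), and that is exactly the content that has to be checked. In the paper it is resolved by direct substitution into the curvature formulas of Theorem \ref{thm:AntitreeCurvatures}: for $\mathcal{AT}(1,b,c,b,1)$ both the radial root equation $\frac{b+1}{b+c}=\frac{1}{2}$ and the inner equation $\frac{2b+c-1}{2b+c-1}-\frac{b+1}{b+c}=\frac{1}{2}$ reduce to the single condition $c=b+2$ (Lemma \ref{FourDimAntitress}), so \emph{every} positive integer $b$ works, not merely an arithmetic progression; for $\mathcal{AT}(1,b,c,d,c,b,1)$ the root equation forces $c=2b+3$ and then the two inner equations both reduce to $d=3b+1$ (Lemma \ref{SixDimAntitress}). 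Note that in the diameter-$6$ case it is the two \emph{inner} equations that coincide, not the pole equation that becomes redundant --- the pole equation is genuinely needed to pin down $c$ --- so your heuristic about which relation is the dependent one is not quite right, even though the count (one redundancy, one free parameter) comes out the same. Since the computation you defer is a few lines of algebra with the stated formulas, your argument is repairable, but as written it asserts rather than proves the key fact.
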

	
    \section{Definitions}
    \subsection{Graph theoretical notation}
    Throughout this paper, we restrict our graphs $G=(V,E)$ to be undirected, simple, unweighted, finite, and connected, where $V$ is the vertex set and $E$ is the edge set of $G$.
    
    We write $x \sim y$ if there exists an edge between the vertices $x$ and $y$ and we write $x \not\sim y$ if there does not exist an edge between the vertices $x$ and $y$. The degree of a vertex $x \in V$ is denoted by $d_x$.

    For any two vertices $x,y\in V$, the (combinatorial) distance $d(x,y)$ is the length (i.e. the number of edges) in a shortest path from $x$ to $y$. 
    
    The {\it diameter} of $G$ is denoted by ${\rm diam}(G)= \max_{x,y\in V} d(x,y)$. 
    
    A vertex $x\in V$ is called a {\it pole} if there exists a vertex $y\in V$ such that $d(x,y)={\rm diam}(G)$, in which case $y$ will be called an {\it antipole} of $x$ (with respect to $G$). A graph $G$ is called {\it self-centered} if every vertex is a pole.
    
    A family of graphs that we will refer to are the {\it cocktail party graphs} $CP(n)$ obtained by removal of a perfect
    matching from the complete graph $K_{2n}.$

    \subsection{ Ollivier-Ricci curvature }
Ollivier-Ricci curvature was introduced in \cite{Ollivier} and is based on optimal transport. A fundamental concept in optimal transport is the
Wasserstein distance between probability measures.
\begin{defin}
Let $G = (V,E)$ be a graph. Let $\mu_{1},\mu_{2}\in V\mapsto[0,1]$ be two probability measures on $V$. The {\it Wasserstein distance} $W_1(\mu_{1},\mu_{2})$ between $\mu_{1}$ and $\mu_{2}$ is defined as
\begin{equation} \label{eq:W1def}
W_1(\mu_{1},\mu_{2})=\inf_{\pi} \sum_{y\in V}\sum_{x\in V} d(x,y)\pi(x,y),
\end{equation}
where the infimum runs over all transportation plans $\pi:V\times  V\rightarrow [0,1]$ satisfying
$$\mu_{1}(x)=\sum_{y\in V}\pi(x,y),\:\:\:\mu_{2}(y)=\sum_{x\in V}\pi(x,y).$$
\end{defin}
The transportation plan $\pi$ moves a mass
distribution given by $\mu_1$ into a mass distribution given by
$\mu_2$, and $W_1(\mu_1,\mu_2)$ is a measure for the minimal effort
which is required for such a transition.
\\
\\
If $\pi$ attains the infimum in \eqref{eq:W1def} we call it an {\it
  optimal transport plan} transporting $\mu_{1}$ to $\mu_{2}$.
\\
\\
We define the following probability distributions $\mu_x$ for any
$x\in V,\: p\in[0,1]$:
$$\mu_x^p(z)=\begin{cases}p,&\text{if $z = x$,}\\
\frac{1-p}{d_x},&\text{if $z\sim x$,}\\
0,& \mbox{otherwise.}\end{cases}$$

These probability distributions can be thought of as a simple random work with idleness parameter $p$.

\begin{defin}
The $ p-$Ollivier-Ricci curvature on an edge $x\sim y$ in $G=(V,E)$ is
$$\kappa_{ p}(x,y)=1-W_1(\mu^{ p}_x,\mu^{ p}_y),$$
where $p$ is called the {\it idleness}.

The Ollivier-Ricci curvature introduced by Lin-Lu-Yau in
\cite{LLY}, is defined as
$$\kappa_{LLY}(x,y) = \lim_{ p\rightarrow 1}\frac{\kappa_{ p}(x,y)}{1- p}.$$
\end{defin}
More details on the motivation of this notion can be found in \cite{Ollivier}.

A fundamental concept in the optimal transport theory is Kantorovich duality. First we recall the notion
of a 1--Lipschitz functions and then state Kantorovich duality.

\begin{defin}
Let $G=(V,E)$ be a locally finite graph, $\phi:V\rightarrow\mathbb{R}.$ We say that $\phi$ is $1$-Lipschitz if 
$$|\phi(x) - \phi(y)| \leq d(x,y)$$
for all $x,y\in V.$ Let \textrm{1--Lip} denote the set of all $1$--Lipschitz functions. 
\end{defin}

\begin{thm}[Kantorovich duality]\label{Kantorovich}
Let $G = (V,E)$ be a locally finite graph. Let $\mu_{1},\mu_{2}$ be two probability measures on $V$. Then
$$W_1(\mu_{1},\mu_{2})=\sup_{\substack{\phi:V\rightarrow \mathbb{R}\\ \phi\in \textrm{\rm{1}--{\rm Lip}}}}  \sum_{x\in V}\phi(x)(\mu_{1}(x)-\mu_{2}(x)).$$
\\
\\
If $\phi \in \textrm{1--Lip}$ attains the supremum we call it an \emph{optimal Kantorovich potential} transporting $\mu_{1}$ to $\mu_{2}$.
\end{thm}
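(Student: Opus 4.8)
The statement is the discrete incarnation of the classical Kantorovich duality theorem, and in the finite-support setting it is precisely the strong duality theorem of finite-dimensional linear programming. The plan is therefore to reduce to finite support, phrase the transport problem as a linear program, verify the easy (weak duality) inequality by a direct computation, and obtain the reverse inequality from LP strong duality combined with a $c$-transform argument that collapses the two dual potentials into a single $1$-Lipschitz function. To reduce to finite support I would use that every feasible transport plan $\pi$ must be supported on $\mathrm{supp}(\mu_1)\times\mathrm{supp}(\mu_2)$, while any $1$-Lipschitz $\phi$ may be restricted to the finite set $S=\mathrm{supp}(\mu_1)\cup\mathrm{supp}(\mu_2)$ and then extended back to a $1$-Lipschitz function on all of $V$ by the McShane formula $\tilde\phi(z)=\min_{s\in S}\bigl(\phi(s)+d(s,z)\bigr)$; since $\mu_1,\mu_2$ are supported on $S$, neither side of the asserted identity changes under this restriction. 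The transport problem then becomes the finite linear program of minimizing $\sum_{x,y}d(x,y)\pi(x,y)$ over $\pi\ge 0$ subject to $\sum_y\pi(x,y)=\mu_1(x)$ and $\sum_x\pi(x,y)=\mu_2(y)$; this program is feasible (take $\pi(x,y)=\mu_1(x)\mu_2(y)$) and its objective is bounded below by $0$ on a compact feasible polytope, so an optimal plan exists. (The measures $\mu_x^p$ arising in Section 2.2 all have finite support, so this reduction covers every case used in the paper.)

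The weak duality inequality, namely that the right-hand side is at most $W_1(\mu_1,\mu_2)$, follows by writing, for any feasible $\pi$ and any $1$-Lipschitz $\phi$,
$$\sum_{x\in V}\phi(x)\bigl(\mu_1(x)-\mu_2(x)\bigr)=\sum_{x,y}\bigl(\phi(x)-\phi(y)\bigr)\pi(x,y)\le\sum_{x,y}d(x,y)\pi(x,y),$$
where the final inequality uses $\phi(x)-\phi(y)\le d(x,y)$ together with $\pi\ge 0$. Taking the supremum over $\phi\in\textrm{1--Lip}$ and then the infimum over feasible $\pi$ gives the claimed bound.

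For the reverse inequality I would pass to the LP dual, introducing a potential $f(x)$ for each constraint $\sum_y\pi(x,y)=\mu_1(x)$ and $g(y)$ for each constraint $\sum_x\pi(x,y)=\mu_2(y)$. The dual program maximizes $\sum_x f(x)\mu_1(x)+\sum_y g(y)\mu_2(y)$ subject to $f(x)+g(y)\le d(x,y)$ for all $x,y$, and finite-dimensional LP strong duality forces its optimum to equal $W_1(\mu_1,\mu_2)$. It then remains to collapse the pair $(f,g)$ into a single $1$-Lipschitz function. Given an optimal $(f,g)$, replacing $g$ by the larger feasible choice $g^\ast(y)=\min_x\bigl(d(x,y)-f(x)\bigr)$ cannot decrease the objective since $\mu_2\ge 0$, and setting $\phi(x)=\min_y\bigl(d(x,y)-g^\ast(y)\bigr)$ produces a feasible potential with $\phi\ge f$, hence again no smaller objective as $\mu_1\ge 0$. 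The triangle inequality for $d$ shows $\phi$ is $1$-Lipschitz, and for such $\phi$ the value $\min_x\bigl(d(x,y)-\phi(x)\bigr)$ is attained at $x=y$ and equals $-\phi(y)$, so the optimal partner is $g=-\phi$. Conversely any $1$-Lipschitz $\phi$ yields the feasible dual pair $(\phi,-\phi)$. Thus the LP dual optimum coincides with $\sup_{\phi\in\textrm{1--Lip}}\sum_x\phi(x)\bigl(\mu_1(x)-\mu_2(x)\bigr)$, and combining with strong duality completes the proof; the attaining $\phi$ is the optimal Kantorovich potential.

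The main obstacle is the strong duality step: weak duality and the $c$-transform manipulations are routine, and it is the absence of a duality gap that forces the two extremal values to agree. I expect to handle this by appealing to the LP strong duality theorem in the finite-support reduction, where feasibility and boundedness have already been checked. The only points requiring care are confirming that the $c$-transform $\phi(x)=\min_y\bigl(d(x,y)-g^\ast(y)\bigr)$ is genuinely $1$-Lipschitz, which uses that $d$ is a metric, and that the McShane extension back to $V$ preserves both the Lipschitz property and the value of $\sum_x\phi(x)\bigl(\mu_1(x)-\mu_2(x)\bigr)$.
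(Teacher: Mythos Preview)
The paper does not prove this theorem; Kantorovich duality is quoted as a standard tool from optimal transport without argument. Your proposal is a correct and standard proof via finite linear-programming strong duality together with the $c$-transform reduction of the dual pair $(f,g)$ to a single $1$-Lipschitz potential, and since the paper restricts throughout to finite graphs the reduction to finite support is in fact trivial here.
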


The following result on some properties of $p \mapsto \kappa_p(x,y)$
for $x \sim y$ allows us to choose a convenient choice of idleness parameter $p$ for calculation considerations and then calculate $\kappa_{LLY}(x,y)$  from $\kappa_{p}(x,y) $.

\begin{thm}[see \cite{BCLMP}]\label{thm:idleness}
  Let $G=(V,E)$ be a locally finite graph. Let $x,y\in V$ with
  $x\sim y.$ Then the function $p \mapsto \kappa_{p}(x,y)$ is concave
  and piecewise linear over $[0,1]$ with at most $3$ linear
  parts. Furthermore $\kappa_{p}(x,y)$ is linear on the intervals
  \begin{equation*}
    \left[0,\frac{1}{{\rm{lcm}}(d_{x},d_{y})+1}\right]\:\:\: {\rm and} \:\:\:\left[\frac{1}{\max(d_{x},d_{y})+1},1\right].
  \end{equation*}
  Thus, if $p\in \left[\frac{1}{\max(d_{x},d_{y})+1},1\right],$ then 
  $$\kappa_{LLY}(x,y) = \frac{1}{1-p}\kappa_{p}(x,y).$$
\end{thm}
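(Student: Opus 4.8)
The plan is to combine the Kantorovich duality of Theorem~\ref{Kantorovich} with the elementary fact that $p\mapsto\mu_x^p$ is an affine path in the probability simplex, so that the problem reduces to the geometry of one fixed finite-dimensional polytope, and then to analyse optimal transport plans near the endpoints $p=0$ and $p=1$.

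\emph{Concavity and piecewise linearity.} Write $\mu_x^p=p\,\delta_x+(1-p)\mu_x^0$, where $\mu_x^0$ is the uniform probability measure on $N(x)$, and likewise at $y$; then the signed measure $\mu_x^p-\mu_y^p$ depends affinely on $p$ and is supported on the finite set $S:=\{x,y\}\cup N(x)\cup N(y)$. In the dual problem it therefore suffices to maximise over functions $\phi\colon S\to\R$ that are $1$-Lipschitz for the restriction of the graph metric to $S$ (any such $\phi$ extends to $V$ by $v\mapsto\min_{s\in S}(\phi(s)+d(v,s))$); after normalising $\phi(x)=0$ these form a bounded polytope $P$ independent of $p$. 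Hence $W_1(\mu_x^p,\mu_y^p)=\max_{\phi\in P}\sum_{z\in S}\phi(z)\bigl(\mu_x^p(z)-\mu_y^p(z)\bigr)$ is a maximum of finitely many affine functions of $p$, one per vertex of $P$; so it is convex and piecewise linear, and $\kappa_p(x,y)=1-W_1(\mu_x^p,\mu_y^p)$ is concave and piecewise linear on $[0,1]$ with finitely many pieces.

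\emph{The two extreme intervals and the $\kappa_{LLY}$ formula.} For $p\le\frac1{{\rm lcm}(d_x,d_y)+1}$ one has $p\le\frac{1-p}{d_x}$ and $p\le\frac{1-p}{d_y}$, so the atoms of $\mu_x^p,\mu_y^p$ at $x$ and $y$ never exceed the uniform per-neighbour masses, and the sign pattern of $\mu_x^p-\mu_y^p$ on $S$ does not depend on $p$; I would use this to exhibit a single optimal transport plan together with a matching Kantorovich potential valid for all such $p$, so that the cost is affine and $\kappa_p$ is linear there. For $p\ge\frac1{\max(d_x,d_y)+1}$, say $d_x\ge d_y$, the atom at $x$ now dominates the per-neighbour mass of $\mu_x^p$, which lets one keep a fixed $p$-independent combinatorial skeleton for an optimal plan (ship the surplus at $x$ straight to $y$ and to the under-supplied neighbours of $y$) and a $p$-independent witnessing potential, again forcing $\kappa_p$ to be linear on $[\frac1{\max(d_x,d_y)+1},1]$. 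Since $\mu_x^1=\delta_x$, $\mu_y^1=\delta_y$ and $d(x,y)=1$ we have $\kappa_1(x,y)=0$, so on this last interval $\kappa_p(x,y)=c(1-p)$ for a constant $c$, and therefore $\kappa_{LLY}(x,y)=\lim_{p\to1}\frac{\kappa_p(x,y)}{1-p}=c=\frac{\kappa_p(x,y)}{1-p}$ for all $p\ge\frac1{\max(d_x,d_y)+1}$, which is the final assertion.

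\emph{The bound of three linear parts, and the main obstacle.} From the previous step every breakpoint of $\kappa_p$ lies in the closed interval $[\frac1{{\rm lcm}(d_x,d_y)+1},\frac1{\max(d_x,d_y)+1}]$, so it only remains to show there are at most two of them (equivalently, that this interval contributes at most one further linear piece). I would do this by tracking which mass-balance constraints of the transport linear program can become active as $p$ sweeps across this interval — essentially only the single-vertex constraints at $x$ and $y$ — so that the combinatorial type of the optimal plan changes at most once between the two extreme regimes. This last point is the crux: the soft ingredients are the duality argument and the $\kappa_{LLY}$ formula, whereas controlling \emph{all} intermediate optimal plans uniformly (not merely those at $p=0$ and $p=1$) is the delicate combinatorial part where I expect the real work to lie.
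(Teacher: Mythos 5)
This theorem is not proved in the paper at all --- it is imported verbatim from \cite{BCLMP} --- so your sketch has to be measured against that source, where the statement is the main result and its proof occupies most of the article. Your ``soft'' steps are correct and are essentially the standard ones: since $\mu_x^p=p\delta_x+(1-p)\mu_x^0$ depends affinely on $p$ and the difference $\mu_x^p-\mu_y^p$ is supported on the finite set $S=\{x,y\}\cup N(x)\cup N(y)$, Kantorovich duality exhibits $W_1(\mu_x^p,\mu_y^p)$ as a maximum of finitely many affine functions of $p$ (one per vertex of the polytope of normalized $1$-Lipschitz functions on $S$), whence $\kappa_p$ is concave and piecewise linear; and \emph{granted} linearity on $\left[\tfrac{1}{\max(d_x,d_y)+1},1\right]$, the value $\kappa_1(x,y)=1-W_1(\delta_x,\delta_y)=1-d(x,y)=0$ forces $\kappa_p=c(1-p)$ there, which yields the stated formula for $\kappa_{LLY}$.

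The genuine content, however, is exactly where you defer: linearity on the two named intervals and the three-piece bound, and your proposed mechanism for the lower interval cannot work as stated. You argue that for $p\le \tfrac{1}{\mathrm{lcm}(d_x,d_y)+1}$ the sign pattern of $\mu_x^p-\mu_y^p$ is $p$-independent and that this should pin down a single optimal plan. But the sign pattern is in fact constant on the larger interval $\left[0,\tfrac{1}{\max(d_x,d_y)+1}\right)$: the only coordinates whose sign depends on $p$ are those at $x$ and $y$, namely $p-\tfrac{1-p}{d_y}$ and $\tfrac{1-p}{d_x}-p$, and neither changes sign before $p=\tfrac{1}{\max(d_x,d_y)+1}$. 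If constancy of the sign pattern implied linearity, $\kappa_p$ would be linear on all of $\left[0,\tfrac{1}{\max(d_x,d_y)+1}\right]$ and hence would always have at most two linear parts --- contradicting the examples in \cite{BCLMP} of edges whose idleness function genuinely has three. The $\mathrm{lcm}$ threshold enters for an arithmetic reason (at $p=\tfrac{1}{\mathrm{lcm}(d_x,d_y)+1}$ the atoms $p$, $\tfrac{1-p}{d_x}$, $\tfrac{1-p}{d_y}$ are all integer multiples of $p$, which is what permits an optimal plan at $p=0$ to be deformed affinely), and your sketch does not supply that argument. Similarly, the upper-interval construction (``ship the surplus at $x$ to $y$ and to the under-supplied neighbours of $y$'') and the claim that the combinatorial type of the optimal plan changes at most once across the middle interval are asserted, not proved; these are precisely the technical theorems of \cite{BCLMP}. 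As it stands you have established concavity, piecewise linearity with finitely many pieces, and the conditional deduction of the $\kappa_{LLY}$ identity, but not the two linearity intervals nor the bound of three pieces.
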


Let us now state the discrete Bonnet-Myers Theorem for Ollivier-Ricci
curvature and introduce the associated notion of Bonnet-Myers sharpness
for this curvature notion:

\begin{thm}[Discrete Bonnet-Myers, see \cite{Ollivier,LLY}]
  \label{thm:DBM}
  Let $G= (V,E)$ be a connected graph and $\inf_{x \sim y}
  \kappa_{LLY}(x,y) > 0$. Then $G$ has finite diameter $L = {\rm diam}(G) <
  \infty$ and  
 \begin{equation} \label{eq:BM_est} 
  \inf_{x \sim y} \kappa_{LLY}(x,y) \le \frac{2}{L}. 
  \end{equation}
  We say that such a graph $G$ is
  {\bf{\textit{Bonnet-Myers sharp}}} (with respect
    to Ollivier-Ricci curvature) if \eqref{eq:BM_est} holds with
    equality.
\end{thm}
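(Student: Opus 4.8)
The plan is to reconstruct the standard optimal-transport proof, whose two ingredients are a \emph{global} Wasserstein contraction coming from the positive curvature lower bound, together with a matching lower bound on the transport distance between the measures centred at a pair of most-distant vertices. Throughout, write $K = \inf_{x \sim y}\kappa_{LLY}(x,y) > 0$ and fix an idleness $p \in [\tfrac12, 1)$.

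First I would upgrade the edgewise curvature bound to a global contraction: for all $x, y \in V$,
$$W_1(\mu_x^p, \mu_y^p) \le \bigl(1 - (1-p)K\bigr)\, d(x,y).$$
For adjacent $x \sim y$ this is immediate from $\kappa_p(x,y) = 1 - W_1(\mu_x^p, \mu_y^p)$ together with Theorem \ref{thm:idleness}: since $\max(d_x, d_y) \ge 1$ on every edge, the choice $p \ge \tfrac12$ lies in the upper linear range, so $\kappa_p(x,y) = (1-p)\kappa_{LLY}(x,y) \ge (1-p)K$, whence $W_1(\mu_x^p, \mu_y^p) = 1 - \kappa_p(x,y) \le 1 - (1-p)K$. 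For general $x, y$ I would take a geodesic $x = z_0 \sim z_1 \sim \cdots \sim z_m = y$ with $m = d(x,y)$ and chain the adjacent estimates using the triangle inequality for the Wasserstein metric $W_1$.

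Next I would produce a lower bound on $W_1(\mu_{x_0}^p, \mu_{x_L}^p)$ for a pair $x_0, x_L$ realising $d(x_0, x_L) = L := {\rm diam}(G)$, via Kantorovich duality (Theorem \ref{Kantorovich}) with the $1$-Lipschitz test function $\phi(z) = d(z, x_L)$. Every neighbour $w$ of $x_0$ satisfies $d(w, x_L) \ge L - 1$ by the triangle inequality, while every neighbour of $x_L$ lies at distance $1$; evaluating $\sum_z \phi(z)\bigl(\mu_{x_0}^p(z) - \mu_{x_L}^p(z)\bigr)$ then gives
$$W_1(\mu_{x_0}^p, \mu_{x_L}^p) \ge \bigl(pL + (1-p)(L-1)\bigr) - (1-p) = L - 2(1-p).$$

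Finally I would combine the two estimates at the pole pair: $L - 2(1-p) \le W_1(\mu_{x_0}^p, \mu_{x_L}^p) \le L\bigl(1 - (1-p)K\bigr)$. Cancelling $L$ and dividing by $1-p > 0$ yields $LK \le 2$, i.e. $K \le 2/L$, which is exactly \eqref{eq:BM_est}; moreover, since the same chained bound controls $d(x,y)$ for \emph{every} pair, it forces the diameter to be finite. The only point needing genuine care is the uniform choice of idleness: a single $p$ must simultaneously land in the upper linear interval of Theorem \ref{thm:idleness} for every edge traversed, and the observation $\max(d_x,d_y) \ge 1$ on each edge shows any $p \ge \tfrac12$ works, so this obstacle is present but mild.
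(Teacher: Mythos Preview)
The paper does not prove this theorem: it states the discrete Bonnet--Myers inequality as a known result with the citation ``see \cite{Ollivier,LLY}'' and no accompanying argument, so there is nothing to compare your proposal against on the paper's side.

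That said, your argument is the standard and correct one. The two ingredients --- chaining the edgewise contraction $W_1(\mu_x^p,\mu_y^p)\le 1-(1-p)K$ along a geodesic via the triangle inequality for $W_1$, and testing Kantorovich duality against $\phi(z)=d(z,x_L)$ to obtain $W_1(\mu_{x_0}^p,\mu_{x_L}^p)\ge L-2(1-p)$ --- combine exactly as you describe to give $K\le 2/L$. Your treatment of the idleness issue is also right: any fixed $p\ge\tfrac12$ lies in the upper linear interval of Theorem~\ref{thm:idleness} on every edge, so $\kappa_p=(1-p)\kappa_{LLY}$ uniformly. One small wording point: the finite-diameter claim follows not from ``the chained bound controls $d(x,y)$'' alone, but from running the \emph{same} dual lower bound $W_1(\mu_x^p,\mu_y^p)\ge d(x,y)-2(1-p)$ for an arbitrary pair and combining it with the chained upper bound to get $d(x,y)\le 2/K$; in the paper's setting this is moot anyway, since all graphs are assumed finite.
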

    
    \section{Bonnet-Myers sharp graphs of diameter 2}
    We now introduce the diameter 2 graphs which are Bonnet-Myers sharp. 
    \begin{defin}
    Let $a,b$ be non-negative integers with $a > 0$. Let $H(a,b)$ be a graph on $2a+b$ vertices $u_1,\ldots u_{2a}, v_1, \ldots v_b$. For every $1\leq i \leq a$ we set $u_{2i-1}\sim u_{2i}$ and there are no other pairs of adjacent vertices.
    \\
    Let $G(a,b)$ be the graph complement of $H(a,b)$. Note that $G(a,b)$ has diameter $2$ unless $(a,b) = (1,0)$ in which case $G(a,b)$ is the graph of two isolated vertices.
    \end{defin}
    
    \begin{rmk}\label{rmk:Gab}
    If $b = 0$ then $G(a,0)$ is the cocktail party graph on $2a$ vertices and is thus Bonnet-Myers sharp for $a\geq 2$. Otherwise when $b\neq 0$ the graph $G(a,b)$ is not regular.
    \\
    Note that a graph $G$ is isomorphic to $G(a,b)$ for some $a,b$ if and only if $G$ can be obtained by deleting some matching from the complete graph on $n$ vertices, $K_n.$
    \end{rmk}
    
    \begin{prop}\label{prop:IsBMSharp}
    Let $a,b$ be positive integers and set $G = G(a, b)$ with usual vertex set $u_1,\ldots u_{2a}, v_1, \ldots v_b$. Then
    \begin{enumerate}
        \item 
        If $a \geq 2,$ then for all $1\leq i < j \leq 2a$ such that $u_i\sim u_j,$ we have
        $$\kappa_{LLY}(u_i,u_j) = 1.$$
        \item 
        For all $1\leq i \leq 2a, 1\leq j \leq b,$ we have
        $$\kappa_{LLY}(u_i,v_j) = 1.$$
        \item 
        If $b \geq 2$ then for all $1\leq i < j \leq b,$ we have
        $$\kappa_{LLY}(v_i,v_j) = \frac{2a+b}{2a+b-1} > 1.$$
    \end{enumerate}
    Therefore
    $$\inf_{(x,y)\in E(G)}\kappa_{LLY} (x,y) = 1,$$
    and thus $G$ is Bonnet-Myers sharp.
    \end{prop}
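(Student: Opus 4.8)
The plan is to compute $\kappa_{LLY}$ on each edge directly from the definition, using Kantorovich duality (Theorem~\ref{Kantorovich}) together with the idleness reduction of Theorem~\ref{thm:idleness}. Write $N=2a+b$ for the number of vertices. The only non-neighbours of $u_i$ in $G$ are $u_i$ itself and its matched partner (call it $u_{i'}$), so $d_{u_i}=N-2$; every vertex other than $v_j$ is a neighbour of $v_j$, so $d_{v_j}=N-1$. By Theorem~\ref{thm:idleness}, for any edge $x\sim y$ it is enough to fix one idleness value $p$ in the top linear interval $\left[\frac{1}{\max(d_x,d_y)+1},1\right)$, to evaluate $W_1(\mu_x^p,\mu_y^p)$ there, and then to read off $\kappa_{LLY}(x,y)=\kappa_p(x,y)/(1-p)$. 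In every case I will pin down the exact value of $W_1$ by exhibiting a transportation plan (for the upper bound) and a $1$--Lipschitz potential (for the matching lower bound).

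For the lower bounds I would use, on an edge $x\sim y$, the potential $\phi=d(x,\cdot)$, which is $1$--Lipschitz. Since all neighbours of $x$ lie at distance $1$ from $x$, we have $\sum_z\phi(z)\mu_x^p(z)=1-p$, while $\sum_z\phi(z)\mu_y^p(z)=p+\frac{1-p}{d_y}\sum_{z\sim y}d(x,z)$; hence $W_1(\mu_x^p,\mu_y^p)\ge p-(1-p)+\frac{1-p}{d_y}\sum_{z\sim y}d(x,z)$. In case~(1), taking $x=u_i$, the only vertex at distance $2$ from $u_i$ is $u_{i'}$, and $u_{i'}$ is a neighbour of $u_j$, so $\sum_{z\sim u_j}d(u_i,z)=N-2=d_{u_j}$ and the bound becomes $W_1\ge p$. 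In case~(2) the same computation with $y=v_j$ gives $\sum_{z\sim v_j}d(u_i,z)=N-1=d_{v_j}$, again $W_1\ge p$. In case~(3), no vertex is at distance $2$ from $v_i$, so $\sum_{z\sim v_j}d(v_i,z)=N-2$ and $W_1\ge p-\frac{1-p}{N-1}$.

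For the upper bounds I would build explicit transportation plans (valid for $p$ in the stated interval, where all the surpluses below are non-negative). On a $v_iv_j$ edge the two measures agree on the $N-2$ common neighbours, so one simply moves the surplus $p-\frac{1-p}{N-1}$ from $v_i$ to $v_j$ along the edge, at cost $p-\frac{1-p}{N-1}$. On a $u_iu_j$ edge one moves the surplus $p-\frac{1-p}{N-2}$ from $u_i$ to $u_j$ and the surplus $\frac{1-p}{N-2}$ from $u_{j'}$ to $u_{i'}$, both along edges, for total cost $p$. The delicate case is a $u_iv_j$ edge: the non-neighbour $u_{i'}$ of $u_i$ must receive mass $\frac{1-p}{N-1}$, each of the $N-3$ common neighbours of $u_i$ and $v_j$ carries a surplus $\frac{1-p}{(N-2)(N-1)}$, and these do not quite suffice to fill $u_{i'}$ at distance $1$; the residual amount $\frac{1-p}{(N-1)(N-2)}$ must be shipped from $u_i$ to $u_{i'}$ at distance $2$, with the remaining surplus at $u_i$ sent to $v_j$ along the edge. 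Checking mass balance and summing the three contributions shows the total cost is again exactly $p$. Verifying this bookkeeping, and the feasibility of each plan, is the main technical point of the argument.

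Combining the two bounds gives $W_1=p$ on the $u_iu_j$ and $u_iv_j$ edges and $W_1=p-\frac{1-p}{N-1}$ on the $v_iv_j$ edges, hence $\kappa_p(u_i,u_j)=\kappa_p(u_i,v_j)=1-p$ and $\kappa_p(v_i,v_j)=(1-p)\frac{N}{N-1}$; dividing by $1-p$ yields the three claimed values of $\kappa_{LLY}$, with $\frac{N}{N-1}=\frac{2a+b}{2a+b-1}>1$ since $N\ge 3$. Finally, because $a,b\ge 1$ there is at least one edge of type $u_iv_j$, so $\inf_{(x,y)\in E(G)}\kappa_{LLY}(x,y)=1$; as $(a,b)\neq(1,0)$ we have ${\rm diam}(G)=2$, and therefore $\inf_{(x,y)\in E(G)}\kappa_{LLY}(x,y)=1=\frac{2}{{\rm diam}(G)}$, which by Theorem~\ref{thm:DBM} means $G$ is Bonnet-Myers sharp.
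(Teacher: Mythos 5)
Your proof is correct and follows the same overall framework as the paper: fix an idleness $p$ in the top linear interval of Theorem~\ref{thm:idleness}, compute $W_1(\mu_x^p,\mu_y^p)$ exactly by pairing an explicit transport plan with a $1$--Lipschitz potential, and divide by $1-p$. The difference is one of execution. The paper chooses the idleness at the left endpoint of the relevant interval ($p=\frac{1}{2a+b-1}$ for $u_iu_j$ edges and $p=\frac{1}{2a+b}$ for the other two cases), which collapses the bookkeeping: in cases (1) and (2) exactly one vertex gains mass, and in case (3) the two measures become identical so $W_1=0$ with no plan needed. Your general-$p$ computation, with $\phi=d(x,\cdot)$ as the potential, is more uniform but forces the three-way plan in the $u_iv_j$ case. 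One small caveat there: your parenthetical claim that the surpluses are non-negative throughout $\left[\frac{1}{\max(d_x,d_y)+1},1\right)$ is not quite right for the $u_iv_j$ edge --- at $p$ near $\frac{1}{2a+b}$ the vertex $v_j$ actually has a surplus rather than a deficit and $u_i$ has none, so your plan as described is infeasible there. This does not damage the argument, since you only need one admissible $p$ and any $p>\frac{1}{2a+b-1}$ works, but you should either fix such a $p$ explicitly or note the restriction.
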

    \begin{proof}
    \begin{enumerate}
    \item 
We work with idleness $p = \frac{1}{2a+b-1}.$ Without loss of generality we may assume that $i=1$ and $j=3.$ Note that $d_{u_1} = d_{u_3} = 2a+b-2.$
Observe that
$$\mu^p_{u_1}(u_4) > \mu^p_{u_3}(u_4), \:\:\: \mu^p_{u_1}(u_2) < \mu^p_{u_3}(u_2),  $$
and 
$$\mu^p_{u_1}(x) = \mu^p_{u_3}(x)$$
for all $x\neq u_2,u_4.$
        
Thus when transporting $\mu^p_{u_1}$ to $\mu^p_{u_3}$ the only vertex that gains mass is $u_2.$ Note further that this mass can be transported over a distance of 1. Thus
$$W_{1}(\mu^p_{u_1}, \mu^p_{u_3}) \leq \mu^p_{u_3}(u_2) - \mu^p_{u_1}(u_2) =\frac{1}{2a+b-1}. $$
We verify that this is in fact equality by constructing the following $\phi\in 1-$Lip,
$$
\phi(x) = \begin{cases}
			1, &  x = u_4\\
            0, & \text{otherwise}
		 \end{cases}.
$$
        
Then, by Theorem \ref{Kantorovich},
$$W_{1}(\mu^p_{u_1}, \mu^p_{u_3})\geq  \sum_{x}\phi(x)(\mu^{p}_{u_1}(x)-\mu^{p}_{u_3}(x)) = \frac{1}{2a+b-1}.$$
Therefore 
$$\kappa_{LLY}(u_1,u_3) = \frac{1}{1-p}\kappa_{p}(u_1, u_3) = \frac{2a+b-1}{2a+b-2} (1 - W_{1}(\mu^p_{u_1}, \mu^p_{u_3})) = 1. $$
\item
We work with idleness $p = \frac{1}{2a+b}.$ Without loss of generality we may assume that $i=j=1.$ Note that $d_{u_1} = 2a+b-2, d_{v_1} = 2a+b-1.$

Observe that
$$\mu^p_{u_1}(u_2) < \mu^p_{v_1}(u_2), \:\:\: \mu^p_{u_1}(u_1) = \mu^p_{v_1}(u_1), $$
and 
$$\mu^p_{u_1}(x) \geq \mu^p_{v_1}(x)$$
for all $x\neq u_2.$

Thus when transporting $\mu^p_{u_1}$ to $\mu^p_{v_1}$ the only vertex that gains mass is $u_2.$ Since the only vertex that is distance more than one away from $u_2$ is $u_1$ this mass can be transported over a distance of 1. Thus
$$W_{1}(\mu^p_{u_1}, \mu^p_{v_1}) \leq \mu^p_{v_1}(u_2) - \mu^p_{u_1}(u_2) =\frac{1}{2a+b}. $$
We verify that this is in fact equality by constructing the following $\phi\in 1-$Lip,
$$
\phi(x) = \begin{cases}
			-1, &  x = u_2\\
            0, & \text{otherwise}
		 \end{cases}.
$$

Then, by Theorem \ref{Kantorovich},
$$W_{1}(\mu^p_{u_1}, \mu^p_{v_1})\geq  \sum_{x}\phi(x)(\mu^{p}_{u_1}(x)-\mu^{p}_{v_1}(x)) = \frac{1}{2a+b}.$$
Therefore 
$$\kappa_{LLY}(u_1,v_1) = \frac{1}{1-p}\kappa_{p}(u_1, v_1) = \frac{2a+b}{2a+b-1} (1 - W_{1}(\mu^p_{u_1}, \mu^p_{v_1})) = 1. $$
\item
We work with idleness $p = \frac{1}{2a+b}.$ Note that $\mu^p_{v_i} \equiv \mu^{p}_{v_j}.$ Thus 
$$\kappa_{LLY}(v_i,v_j) = \frac{1}{1-p}\kappa_{p}(v_i,v_j) = \frac{2a+b}{2a+b-1} (1 - W_{1}(\mu^p_{v_i}, \mu^p_{v_j})) = \frac{2a+b}{2a+b-1}. $$
    \end{enumerate}
    
    \end{proof}
    We now show that these graphs are in fact the complete set of Bonnet-Myers sharp graphs of diameter 2. We first prove some properties of these graphs.
    
    \begin{lemma}\label{lemma:MassMoved}
    Let $G = (V,E)$ be a Bonnet-Myers sharp graph of diameter 2. Let $x,y\in V$ with $x\sim y$ and $p\in \left[\frac{1}{\max(d_{x},d_{y})+1},1\right].$ Then
    $$W_{1}\left(\mu^p_{x}, \mu^p_{y}\right) \leq p.$$
    \end{lemma}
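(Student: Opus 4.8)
The plan is to read off the inequality directly from the definition of Bonnet-Myers sharpness together with the linear-range formula for $\kappa_{LLY}$ in Theorem~\ref{thm:idleness}. First I would observe that, since $G$ has diameter $L={\rm diam}(G)=2$ and is Bonnet-Myers sharp, the estimate~\eqref{eq:BM_est} is an equality, so $\inf_{x\sim y}\kappa_{LLY}(x,y)=2/L=1$; in particular every edge $x\sim y$ satisfies $\kappa_{LLY}(x,y)\ge 1$.

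Next, fix the given edge $x\sim y$ and idleness $p\in\left[\frac{1}{\max(d_x,d_y)+1},1\right]$. When $p=1$, the measures $\mu_x^p$ and $\mu_y^p$ are the point masses at $x$ and $y$ respectively, so $W_1(\mu_x^p,\mu_y^p)=d(x,y)=1=p$ and the claim holds trivially. When $p<1$, Theorem~\ref{thm:idleness} applies on this interval and gives
$$\kappa_{LLY}(x,y)=\frac{1}{1-p}\,\kappa_p(x,y)=\frac{1}{1-p}\bigl(1-W_1(\mu_x^p,\mu_y^p)\bigr).$$
Combining this identity with $\kappa_{LLY}(x,y)\ge 1$ and multiplying through by $1-p>0$ yields $1-W_1(\mu_x^p,\mu_y^p)\ge 1-p$, that is, $W_1(\mu_x^p,\mu_y^p)\le p$.

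There is essentially no obstacle here: the statement is a direct consequence of the definitions once Theorem~\ref{thm:idleness} is invoked, the only points requiring a moment's care being that diameter $2$ pins the sharp curvature value to exactly $1$, and that the endpoint $p=1$ must be treated separately since the formula $\kappa_{LLY}=\kappa_p/(1-p)$ degenerates there (though the conclusion is immediate in that case). This lemma will presumably serve as a building block — controlling how much mass must move under an optimal transport — in the subsequent structural analysis of diameter $2$ Bonnet-Myers sharp graphs leading to Theorem~\ref{thm:classification}.
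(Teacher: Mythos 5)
Your proposal is correct and follows essentially the same route as the paper: deduce $\kappa_{LLY}(x,y)\ge 1$ from sharpness at diameter $2$, then apply Theorem~\ref{thm:idleness} to write $W_1(\mu_x^p,\mu_y^p)=1-(1-p)\kappa_{LLY}(x,y)\le p$. Your separate treatment of the endpoint $p=1$ is a small extra care the paper omits, but the argument is otherwise identical.
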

    \begin{proof}
    Since $G$ is Bonnet-Myers sharp with diameter 2 we have $\kappa_{LLY}(x,y) \geq 1.$ Applying the definition of $\kappa_p$ and Theorem \ref{thm:idleness} gives 
    $$W_{1}\left(\mu^p_{x}, \mu^p_{y}\right) = 1 - \kappa_{p}(x,y) = 1 - (1-p)\kappa_{LLY}(x,y)\leq p.$$
    \end{proof}
    
    \begin{lemma}\label{lemma:SameDegree}
    Let $G = (V,E)$ be a Bonnet-Myers sharp graph of diameter 2. Let $x,y\in V$ with $x\sim y$ such that $d_x=d_y.$ Then there exists at most one element $x'\in V\setminus\{x,y\}$ such that $x\sim x'$ and $y\not\sim x'.$
    Furthermore, if such an $x'$ exists then there exists a $y'\in V\setminus\{x,y\}$ such that $y\sim y', x\not\sim y'$ and $x'\sim y'.$ 
    \end{lemma}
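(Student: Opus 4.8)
The plan is to run everything at the idleness value $p=\tfrac1{d+1}$, where $d:=d_x=d_y$. This $p$ lies in $\bigl[\tfrac1{\max(d_x,d_y)+1},1\bigr]$, so Lemma~\ref{lemma:MassMoved} applies and gives $W_1(\mu^p_x,\mu^p_y)\le p$; moreover it is exactly the value for which $\tfrac{1-p}{d}=p$. I would first record the consequence of this identity: $\mu^p_x$ and $\mu^p_y$ take the same value at $x$, at $y$, and at every common neighbour of $x$ and $y$, so that $\mu^p_x-\mu^p_y$ equals $+p$ on each private neighbour of $x$ (meaning a vertex of $V\setminus\{x,y\}$ adjacent to $x$ but not to $y$), $-p$ on each private neighbour of $y$, and $0$ elsewhere. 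The two sets of private neighbours are disjoint, and since both $\mu^p_x$ and $\mu^p_y$ are probability measures, summing $\mu^p_x-\mu^p_y$ over $V$ forces $x$ and $y$ to have the same number $m$ of private neighbours.

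For the first assertion I would bound $W_1(\mu^p_x,\mu^p_y)$ from below via Kantorovich duality (Theorem~\ref{Kantorovich}), using the indicator function of the set of private neighbours of $x$ as the potential; this function is $\{0,1\}$-valued, hence automatically $1$-Lipschitz, and it yields $W_1(\mu^p_x,\mu^p_y)\ge mp$. Comparing with the upper bound $W_1(\mu^p_x,\mu^p_y)\le p$ gives $m\le 1$, which is precisely the claim that $x$ has at most one private neighbour.

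For the second assertion, assume $x'$ exists; then $m=1$, so $y$ has a unique private neighbour $y'$, which by definition satisfies $y'\in V\setminus\{x,y\}$, $y\sim y'$, $x\not\sim y'$, and also $x'\ne y'$ (as $x'\not\sim y$ while $y'\sim y$). It remains to show $x'\sim y'$, and I would argue by contradiction: if $x'\not\sim y'$ then $d(x',y')=2$ because ${\rm diam}(G)=2$, and applying Kantorovich duality with the $1$-Lipschitz potential $v\mapsto\max\bigl(0,\,2-d(v,x')\bigr)$ to the measure difference $\mu^p_x-\mu^p_y=p\,\mathbf 1_{\{x'\}}-p\,\mathbf 1_{\{y'\}}$ gives $W_1(\mu^p_x,\mu^p_y)\ge 2p$, contradicting Lemma~\ref{lemma:MassMoved}. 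Hence $x'\sim y'$.

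There is no genuinely hard step here; the only point that needs care is the structural description of $\mu^p_x-\mu^p_y$ at $p=\tfrac1{d+1}$ and, in particular, checking that the private neighbours of $x$ and of $y$ form disjoint sets — this is exactly what makes the two one-line duality estimates do all the work.
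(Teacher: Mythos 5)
Your proof is correct and follows essentially the same route as the paper: work at idleness $p=\tfrac1{d+1}$ so that $\mu^p_x-\mu^p_y$ is supported on the private neighbours with values $\pm p$, invoke Lemma~\ref{lemma:MassMoved} for the upper bound $W_1\le p$, and lower-bound the cost by the mass that must move times the distance it travels. The only difference is cosmetic: you make the lower bounds explicit via Kantorovich potentials where the paper argues informally with the primal transport plan, which is a welcome tightening but not a different argument.
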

    In other words all neighbours of $x$ are also neighbours of $y$ except for at most one vertex $x'$ which forms a square as illustrated below.
    \\
\begin{center}
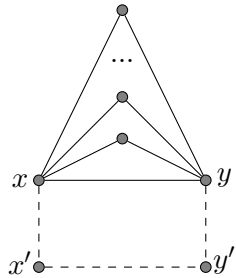

    \begin{tikzpicture}[
        roundnode/.style={circle, draw, fill=black!50,inner sep=0pt, minimum width=4pt},
        ]
        \node[roundnode,label={[xshift=-0.25cm,yshift=-0.3cm] $x$}]      (A)                  {};
        \node[roundnode, label={[yshift=0.25cm] ...}]      (C)                  [above right =of A]{};
        \node[roundnode,]      (F)                  [above =of C]{};
        \node[roundnode,label={[xshift=0.25cm,yshift=-0.3cm] $y$}]      (B)                  [below right =of C]{};
        \node[roundnode,label={[xshift=-0.25cm,yshift=-0.3cm] $x'$}]      (D)                  [below =of A]{};
        \node[roundnode,label={[xshift=0.25cm,yshift=-0.3cm] $y'$}]      (E)                  [below =of B]{};
        \node[roundnode,]      (G)                  [below =of C,yshift=0.6cm]{};
        
        \draw(A)     edge    (B);
        \draw(A)     edge    (C);
        \draw(B)     edge    (C);
        \draw(F)     edge    (A);
        \draw(B)     edge    (F);
        \draw(G)     edge    (A);
        \draw(B)     edge    (G);
        \draw[dashed](A)     edge    (D);
        \draw[dashed](D)     edge    (E);
        \draw[dashed](B)     edge    (E);
    \end{tikzpicture}
    \captionof{figure}{A visual representation of Lemma \ref{lemma:SameDegree}. }
\end{center}

    \begin{proof}
    When transporting $\mu^{\frac{1}{d_x+1}}_{x}$ to $\mu^{\frac{1}{d_x+1}}_{y}$ each neighbour of $x$ which is not a neighbour of $y$ contributes at least $\frac{1}{d_x+1}$ to the cost of $W_{1}\left(\mu^{\frac{1}{d_x+1}}_{x}, \mu^{\frac{1}{d_x+1}}_{y}\right).$
    
    However, by Lemma \ref{lemma:MassMoved}, $W_{1}\left(\mu^{\frac{1}{d_x+1}}_{x}, \mu^{\frac{1}{d_x+1}}_{y}\right)\leq \frac{1}{d_x+1},$ which shows that at most one element satisfies the desired properties of $x'.$ Furthermore, the mass of any such element must travel a distance of at most $1,$ thus showing the existence of $y'.$
    \end{proof}
    
    \begin{lemma}\label{lemma:DifferentDegree}
    Let $G = (V,E)$ be a Bonnet-Myers sharp graph of diameter 2. Let $x,y\in V$ with $x\sim y$ such that $d_x<d_y.$ Then there does not exist any $z\in V\setminus\{x,y\}$ such that $x\sim z$ and $y\not\sim z.$
    \end{lemma}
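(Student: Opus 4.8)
The strategy mirrors Lemma~\ref{lemma:SameDegree}: work at the idleness value $p = \frac{1}{\max(d_x,d_y)+1} = \frac{1}{d_y+1}$, which lies in the linear range of Theorem~\ref{thm:idleness}, and bound the Wasserstein cost from below by counting neighbours of $x$ that are not neighbours of $y$. Suppose, for contradiction, that there is a vertex $z \in V\setminus\{x,y\}$ with $x\sim z$ and $y\not\sim z$. First I would compare the two measures $\mu^p_x$ and $\mu^p_y$ pointwise. At each neighbour $w$ of $x$ that is \emph{not} a neighbour of $y$ (and $w\neq y$), the mass $\mu^p_x(w) = \frac{1-p}{d_x}$ must be moved somewhere, while $\mu^p_y(w) = 0$; at the vertex $y$ we have $\mu^p_x(y) = \frac{1-p}{d_x}$ and $\mu^p_y(y) = p = \frac{1}{d_y+1}$; and at $x$ itself $\mu^p_x(x) = p$ versus $\mu^p_y(x) = \frac{1-p}{d_y}$.

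The key quantitative step is to show that the cost of transporting $\mu^p_x$ to $\mu^p_y$ strictly exceeds $p$, contradicting Lemma~\ref{lemma:MassMoved}. Each of the (at least one) neighbours $z$ of $x$ with $y\not\sim z$ forces at least $\frac{1-p}{d_x}$ units of mass to be transported (this mass cannot stay at $z$, since $\mu^p_y(z)=0$ there), and in diameter~$2$ this mass travels distance at least $1$. With $p = \frac{1}{d_y+1}$ and $d_x < d_y$, one checks $\frac{1-p}{d_x} = \frac{d_y}{d_x(d_y+1)} > \frac{1}{d_y+1} = p$ precisely because $d_y > d_x$. So a single such $z$ already contributes cost strictly greater than $p$. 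To make this rigorous one should exhibit a suitable $1$-Lipschitz function $\phi$ (via Kantorovich duality, Theorem~\ref{Kantorovich}) — e.g. $\phi$ equal to $1$ on the set of $x$-neighbours missing from $y$'s neighbourhood and $0$ elsewhere, or more carefully a function separating $z$ from the rest — and compute $\sum_x \phi(x)(\mu^p_x(x) - \mu^p_y(x))$ to get a lower bound on $W_1$ that beats $p$. The cleanest route is probably: since $\mu^p_x(z) - \mu^p_y(z) = \frac{1-p}{d_x} > p$, and the total mass that $\mu^p_y$ places on $z$ and everything within distance $1$ of... — actually simplest is just the Lipschitz test function supported appropriately.

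The main obstacle is handling the bookkeeping of where mass can be absorbed: a priori the surplus at $z$ could be offset against a deficit at a neighbour of $z$ at no extra cost beyond what is already counted, so I must be careful that the lower bound $\frac{1-p}{d_x}$ from the single vertex $z$ is not "double-counted" against other necessary movements — but since I only need \emph{one} vertex's worth of excess cost and $\frac{1-p}{d_x} > p$ on its own, choosing $\phi = \mathbf{1}_{\{z\}}$ (which is $1$-Lipschitz since $z$ has no neighbour... wait, $z\sim x$) — the safe choice is $\phi(w) = 1$ if $d(w,z) = 0$ is too weak; instead take $\phi(w) = \max(0, 1 - d(w, z))$, i.e. $\phi(z) = 1$ and $\phi = 0$ off the closed neighbourhood of $z$, which is $1$-Lipschitz, and then $\sum_w \phi(w)(\mu^p_x(w) - \mu^p_y(w)) \geq \mu^p_x(z) - \sum_{w \sim z}\mu^p_y(w)$; bounding $\sum_{w\sim z}\mu^p_y(w)$ from above (it is at most $p + (d_y-1)\frac{1-p}{d_y}$, but not all of $z$'s neighbours are $y$'s neighbours) requires a little case analysis, and I expect this is where the regularity-free setting forces the most care. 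If that $\phi$ proves awkward, the alternative is the transport-plan side: explicitly argue every optimal plan must move $\mu^p_x(z)$ a distance $\geq 1$, giving $W_1 \geq \frac{1-p}{d_x} > p$ directly, which contradicts Lemma~\ref{lemma:MassMoved} and completes the proof.
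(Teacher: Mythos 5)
Your proposal is correct and rests on the same mechanism as the paper: derive a lower bound on $W_1(\mu^p_x,\mu^p_y)$ that exceeds $p$ and contradict Lemma \ref{lemma:MassMoved}. The execution differs in one useful detail. The paper takes $p=\frac{1}{d_x+1}$, where the surplus at $z$ alone equals exactly $p$, so it must also count the surplus at $x$ itself, namely $\mu^p_x(x)-\mu^p_y(x)=\frac{d_y-d_x}{d_y}\cdot\frac{1}{d_x+1}>0$, to push the total strictly above $p$. You instead take $p=\frac{1}{d_y+1}$ (the left endpoint of the admissible interval, so Lemma \ref{lemma:MassMoved} still applies), where the single vertex $z$ already carries mass $\frac{1-p}{d_x}=\frac{d_y}{d_x(d_y+1)}>\frac{1}{d_y+1}=p$ precisely because $d_x<d_y$; since $\mu^p_y(z)=0$, every transport plan must move all of it a distance of at least $1$, and you are done with one term. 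That is a marginally cleaner calculation. Your only wobble is the digression about the test function: $\phi=\mathbf{1}_{\{z\}}$ is automatically $1$-Lipschitz (any $\{0,1\}$-valued function is, since $d(u,v)\ge 1$ for $u\ne v$), and it yields $W_1\ge \mu^p_x(z)-\mu^p_y(z)=\frac{1-p}{d_x}$ directly; the more elaborate function $\max(0,1-d(w,z))$ is the same function and the case analysis you worry about is unnecessary. Either that or the transport-plan argument you state at the end closes the proof rigorously.
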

    \begin{proof}
    Suppose such a $z$ exists. Let $p = \frac{1}{d_x+1}.$ We have
    $$\mu^p_x(z) = \frac{1}{d_x+1}> \:\:\: \mu^p_y(z) = 0,$$
    $$\mu^p_x(x) = \frac{1}{d_x+1}> \:\:\: \mu^p_y(x) = \frac{d_x}{d_y}\frac{1}{d_x+1}.$$
    
    Thus mass must be moved from $x$ and $z$ when transporting $\mu^p_x$ to $\mu^p_y$ and thus
    $$W_{1}(\mu^{p}_{x}, \mu^{p}_{y})\geq \frac{1}{d_x+1}+\frac{d_y - d_x}{d_y}\frac{1}{d_x+1} > \frac{1}{d_x+1} = p,$$
    contradicting Lemma \ref{lemma:MassMoved}.
    \end{proof}

    We are now ready to prove the classification.

    \begin{thm}\label{thm:classification}
    Let $G = (V,E)$ be a Bonnet-Myers sharp graph of diameter 2. Then there exists $a,b\in\mathbb{N}$ with $a>0$ such that $G$ is isomorphic to $G(a,b).$
    \end{thm}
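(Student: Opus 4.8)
By Remark~\ref{rmk:Gab}, it suffices to prove that $G$ arises from the complete graph on $V$ by deleting a matching, equivalently that the complement $\overline{G}$ has maximum degree at most $1$; in other words, that \emph{every vertex of $G$ has at most one non-neighbour}. (Then $a>0$ is automatic, since $G$ has diameter $2$ and hence is not complete, so $\overline{G}$ has at least one edge.)

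The plan is to argue by contradiction. Let $v$ be a vertex of minimum degree $\delta$ and suppose $v$ has at least two non-neighbours; set $Y:=V\setminus N[v]$, so $|Y|\ge 2$. Every vertex has degree $\ge\delta$, so each edge at $v$ is handled either by Lemma~\ref{lemma:SameDegree} or by Lemma~\ref{lemma:DifferentDegree}. Sweeping through these edges, I would first extract the local structure near $v$: the subgraph induced on $N(v)$ is a complete graph with a (possibly empty) matching deleted, and each $w\in N(v)$ is of exactly one of the following types:
\begin{enumerate}
\item a \emph{twin of $v$}, with $N[w]=N[v]$;
\item adjacent to all of $N(v)\setminus\{w\}$, but with a neighbour in $Y$;
\item \emph{matched}, with partner $w^{*}\in N(v)$, so $w\not\sim w^{*}$.
\end{enumerate}
If $w$ is matched, then $w^{*}$ is a neighbour of $v$ not adjacent to $w$, so Lemma~\ref{lemma:DifferentDegree} applied to $v\sim w$ forces $d_w=\delta$; hence $w$ has exactly one neighbour $w'$ in $Y$, and the ``furthermore'' clause of Lemma~\ref{lemma:SameDegree} gives that $w'$ is also the unique neighbour of $w^{*}$ in $Y$. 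Finally, diameter $2$ forces every $y\in Y$ to have a neighbour in $N(v)$, which cannot be a twin of $v$ (twins of $v$ are non-adjacent to all of $Y$), hence is of type (2) or (3).

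The main work — and the expected obstacle — is to turn these constraints into a contradiction. The central mechanism is: whenever $w\in N(v)$ is adjacent to $y\in Y$, the vertex $v$ is a neighbour of $w$ but not of $y$, so Lemma~\ref{lemma:DifferentDegree} applied to $w\sim y$ rules out $d_w<d_y$; and if $d_w=d_y$, then Lemma~\ref{lemma:SameDegree} (together with the observation that every twin of $v$ is also a neighbour of $w$ but not of $y$) forces $v$ to be its own only twin and pins down $N(w)$ inside $N[y]$. Iterating this, one should be able to discard type (2) neighbours of $v$ entirely, reduce to the case where $N(v)$ carries a perfect matching, matched pairs in $N(v)$ share their $Y$-neighbours, and these shared neighbours are themselves minimum-degree vertices exhibiting the same configuration. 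A double count of the edges between $N(v)$ and $Y$, against $d_y\ge\delta$ for all $y\in Y$ and $|Y|\ge 2$, should then close the argument. I expect the difficulty to lie precisely in organising this case analysis so that no configuration consistent with Lemmas~\ref{lemma:SameDegree} and~\ref{lemma:DifferentDegree} survives, in particular in handling the recursive appearance of the same structure at the shared $Y$-neighbours.
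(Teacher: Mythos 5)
Your reduction is sound: by Remark~\ref{rmk:Gab} it does suffice to show that every vertex of $G$ has at most one non-neighbour, and your preliminary structural observations about $N(v)$ (the twin/type-(2)/matched trichotomy, the forced degree $d_w=\delta$ for matched $w$, the shared $Y$-neighbour of a matched pair) are all correctly extracted from Lemmas~\ref{lemma:SameDegree} and~\ref{lemma:DifferentDegree}. But the proposal stops exactly where the proof has to happen. Everything after ``The main work \dots is to turn these constraints into a contradiction'' is a list of intentions (``one should be able to discard type (2) neighbours,'' ``a double count \dots should then close the argument,'' ``I expect the difficulty to lie \dots''), not an argument. No contradiction is actually derived from the hypothesis $|Y|\ge 2$, and you yourself flag that you do not know how to organise the case analysis. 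As it stands this is a plan, not a proof.

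For comparison, the paper closes the argument by a different and more direct route, which you may want to note because it bypasses the case analysis you are worried about. It first disposes of the regular case via Theorem~\ref{thm:CPRigidityThm}. In the non-regular case it picks an edge $x\sim y$ with $d_x<d_y$ and runs the transport estimate of Lemma~\ref{lemma:MassMoved} directly at idleness $p=1/(d_y+1)$: a neighbour $z$ of $y$ at distance $2$ from $x$ already absorbs the entire allowed cost $p$, which forces $d_y=d_x+1$, forces every neighbour of $x$ to be adjacent to $z$, and then (with one short application of Lemmas~\ref{lemma:SameDegree} and~\ref{lemma:DifferentDegree} to pin down $d_z=d_x$) shows that $z$ is the \emph{unique} vertex at distance $2$ from $x$. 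Since ${\rm diam}(G)=2$, this gives $|V|=d_x+2$, hence $d_y=|V|-1$ and every vertex has degree $|V|-1$ or $|V|-2$, which is exactly the ``complement is a matching'' conclusion. The decisive quantitative step --- uniqueness of the distance-$2$ vertex forcing $|V|=d_x+2$ --- is absent from your sketch, and without it (or a completed version of your double-counting argument) the theorem is not proved.
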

    
    \begin{proof}
    If $G$ is regular then by Theorem \ref{thm:CPRigidityThm} it is isomorphic to a Cocktail Party graph and therefore has the desired form by Remark \ref{rmk:Gab}.
    
    Therefore suppose $x,y\in V$ such that $d_x < d_y$ and let $p = \frac{1}{d_y+1}.$ Since $d_y > d_x,$ there exists a $z\sim y$ such that $d(x,z)=2.$ Note that 
    $$\mu^p_x(z) = 0, \:\:\: \mu^p_y(z) = \frac{1}{d_y+1},$$
    and so $ \frac{1}{d_y+1}$ mass must be moved when transporting $\mu^p_x$ to $\mu^p_y$ and then by applying Lemma \ref{lemma:MassMoved} we see that there exists no vertex $w\neq z$ such that $y\sim w$ and $x\not\sim w.$ Therefore $d_y = d_x+1.$ Furthermore all mass must be transported over a distance of $1$ meaning $u\sim z$ for every $u\sim x.$ Thus $d_z \geq d_x.$ Note this shows that all neighbouring vertices differ in degree by at most $1$.
    
    Since $y$ has a neighbor which is not a neighbour of $z$ (namely $x$) and applying Lemma \ref{lemma:DifferentDegree} we conclude that $d_z \leq d_y.$ Thus $d_z = d_x$ or $d_x+1.$ 
    
    First we suppose $d_z = d_x+1$ and search for a contradiction. We have $d_z = d_y$ and since $z\sim y$ and $z\not\sim x$ there must exist a $s\sim z$ with $s\sim x$ and $s\not\sim y$ by Lemma \ref{lemma:SameDegree}. However by \ref{lemma:DifferentDegree}, all neighbours of $x$ are also neighbours of $y$. Therefore contradicting $s\not\sim y$ and therefore $d_z = d_x.$
    
    We now show that $z$ is the unique vertex distance $2$ from $x$. Suppose that this is not the case and there exists $v\in V$ with $d(x,v) = 2.$ Let $u\in V$ with $x\sim u$ and $u\sim v.$ Since every neighbor of $x$ is a neighbour of $z$ we have $u\sim z.$ Thus $u\sim v, u\sim z, x\not\sim v$ and $x\not\sim z$ which is a contradiction by either Lemma \ref{lemma:SameDegree} or Lemma \ref{lemma:DifferentDegree}.  
    
    Thus $z$ is the unique vertex which is distance $2$ from $x.$ Combining this with $G$ having diameter $2$ shows that $|V| = d_x+2.$ Therefore $d_y = |V| - 1$ showing that $y$ is a neighbour of every other vertex. 

    Therefore every vertex has degree $|V|-2$ or $|V|-1.$ Furthermore every vertex of degree $|V|-2$ has a unique vertex of distance $2$ away from itself and this vertex also has degree $|V|-2$. If for every such vertex of degree $|V|-2$ we add in an edge connecting it to the unique vertex which is distance $2$ away then we obtain the complete graph on $|V|$ vertices, $K_{|V|}.$ Thus $G$ can be obtained by deleting a matching from $K_{|V|},$ which by Remark \ref{rmk:Gab} completes the proof. 
    \end{proof}
    
    \section{Non-regular Bonnet-Myers sharp graphs of diameter greater than 2}
    In this section we construct a non-regular Bonnet-Myers sharp graph for any given diameter greater than one. We first must recall the definition of antitrees.
    \subsection{Antitrees}
    For any given finite or infinite sequence $(a_k)_{1 \le k \le N}$, $N \in \mathbb{N} \cup \{\infty\}$, the {\bf antitree}, denoted by $\mathcal{AT}((a_k)),$ has vertex set $V = \bigcup_{1 \leq k \leq N} V_{k}$ where $|V_k| = a_k.$ The vertices that form $V_1$ is called the \emph{root set}. The edges of $\mathcal{AT}((a_k))$ are defined by 
    \begin{itemize}
    \item any two distinct vertices in the same $V_k$ are connected, 
    \item any root vertex $x \in V_1$ is connected to all vertices in $V_2$, and no vertices in $V_k$, $k \ge 3$,
    \item any vertex $x \in V_k$, $k \ge 2$, is connected to all vertices in $V_{k-1}$ and $V_{k+1}$, and no vertices in $V_l$, $|k-l| \ge 2$.
    \end{itemize}
    Note that the diameter of the finite antitree is equal to $N$. Edges between vertices of the component are referred to as \emph{spherical edges}. Edges between vertices of different components are called \emph{radial edges}. Any radial or spherical edge incident to a vertex in $V_1$ is called \emph{radial} or \emph{spherical root-edges}, respectively. All other edges are called \emph{inner edges}.
    \\
    \begin{thm}[see \cite{cushing2018curvature}]\label{thm:AntitreeCurvatures}
        Let $(a_k)_{1 \le k \le N}$, $N \in \mathbb{N} \cup \{\infty\}$ be a finite sequence with  $a_1=1$, then $G = \mathcal{AT} ((a_k))$ has the following curvatures:
    \begin{itemize}
        \item 
        Radial root edges: If $x \in V_1$ and $y \in V_2$ then
        \begin{equation}\label{RadialRoot}
            \kappa_{LLY}(x,y)=\frac{a_2+1}{a_2+a_3}.
        \end{equation}
        \item 
        Radial inner edges: If $x \in V_k$ and $y \in V_k+1$, $k \geq 2$ then 
        \begin{equation}\label{RadialInner}
            \kappa_{LLY}(x,y)=\frac{2a_k+a_{k+1}-1}{a_{k}+a_{k+1}+a_{k+2}-1}-\frac{2a_{k-1}+a_{k}-1}{a_{k-1}+a_{k}+a_{k+1}-1}.
        \end{equation}
        \item 
        Spherical edges: If $x, y \in V_k$ , $x \neq y$, $k \geq 2$ then
        \begin{equation}\label{Spherical}
            \kappa_{LLY}(x,y)=\frac{a_{k-1}+a_k+a_{k+1}}{a_{k-1}+a_k+a_{k+1}-1}.
        \end{equation}
    \end{itemize} 
    \end{thm}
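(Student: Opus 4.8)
The plan is to compute $\kappa_{LLY}$ directly on each of the three edge types, using Theorem~\ref{thm:idleness} to trade the limit $p\to 1$ for a single convenient idleness and Kantorovich duality (Theorem~\ref{Kantorovich}) to evaluate the Wasserstein distances. For an edge $x\sim y$ I would take $p=\frac{1}{\max(d_x,d_y)+1}$, the left endpoint of the interval on which $p\mapsto\kappa_p(x,y)$ is linear, so that $\kappa_{LLY}(x,y)=\frac{1-W_1(\mu_x^p,\mu_y^p)}{1-p}$. The degrees are read off the structure: a vertex of $V_k$ with $1<k<N$ has degree $a_{k-1}+a_k+a_{k+1}-1$, a vertex of $V_N$ has degree $a_{N-1}+a_N-1$, and the root (recall $a_1=1$) has degree $a_2$; with the conventions $a_0=0$, $a_{N+1}=0$ the first formula covers all cases. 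The one structural ingredient used repeatedly is that $\mathrm{Aut}(\mathcal{AT}((a_k)))\supseteq\prod_k\mathrm{Sym}(V_k)$, so that after symmetrising an optimal plan the transport problem becomes one-dimensional along the path of layers; since $d(u,v)=|i-j|$ for $u\in V_i$, $v\in V_j$ with $i\ne j$ and $d(u,v)=1$ within a layer, the function $\phi(z)=\pm(\text{layer index of }z)$ is $1$-Lipschitz, and it will be the optimal Kantorovich potential in the radial cases.

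Spherical edges are immediate. If $x,y\in V_k$ then $x$ and $y$ have the same neighbours apart from the swap $x\leftrightarrow y$, and taking $p=\frac{1}{d+1}$ with $d=a_{k-1}+a_k+a_{k+1}-1$ makes the idle mass $p$ equal to the neighbour mass $\frac{1-p}{d}$; hence $\mu_x^p\equiv\mu_y^p$, so $W_1=0$ and $\kappa_{LLY}(x,y)=\frac{1}{1-p}=\frac{d+1}{d}$, which is \eqref{Spherical}.

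For a radial edge $x\in V_k$, $y\in V_{k+1}$ put $d_x=a_{k-1}+a_k+a_{k+1}-1$ and $d_y=a_k+a_{k+1}+a_{k+2}-1$, and treat first the case $d_x\le d_y$, i.e.\ $a_{k-1}\le a_{k+2}$, with $p=\frac{1}{d_y+1}$. A short pointwise check shows $\mu_x^p(z)\ge\mu_y^p(z)$ for all $z\in V_{k-1}\cup V_k\cup V_{k+1}$, while $\mu_x^p\equiv 0<\mu_y^p$ on $V_{k+2}$; consequently the whole surplus of $\mu_x^p$ over $\mu_y^p$ must be shipped into $V_{k+2}$, and since the surplus carried by $V_{k+1}$, $V_k$, $V_{k-1}$ lies at distance $1$, $2$, $3$ from $V_{k+2}$ respectively, the cost of any optimal plan is forced and equals the corresponding weighted sum of surpluses, which is computed in closed form from $d_x$, $d_y$, $p$. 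The potential $\phi(z)=-(\text{layer index of }z)$ realises this number in Theorem~\ref{Kantorovich}, so it equals $W_1(\mu_x^p,\mu_y^p)$; feeding this into $\kappa_{LLY}(x,y)=\frac{1-W_1}{1-p}$ and simplifying (using $1-p=\frac{d_y}{d_y+1}$, $d_x=a_{k-1}+a_k+a_{k+1}-1$, and the cancellation of the resulting constant terms) gives exactly \eqref{RadialInner}; the root case $k=1$, where $V_1$ is a singleton with no lower neighbour, reduces to \eqref{RadialRoot} by the same computation. The remaining case $d_x>d_y$ is the mirror image: take $p=\frac{1}{d_x+1}$, the surplus is now concentrated on $V_{k-1}$ and must travel inward to $V_k,V_{k+1},V_{k+2}$, and the identical bookkeeping applies.

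I expect the main obstacle to be not a single idea but the bookkeeping just described: checking the pointwise domination $\mu_x^p\ge\mu_y^p$ on the relevant layers, correctly identifying which layers the forced transport actually touches, and handling the boundary layers $V_1$ and $V_N$ --- and the small-diameter antitrees, where the inequality between $d_x$ and $d_y$ can flip --- uniformly with the generic case. One should also note that $p=\frac{1}{\max(d_x,d_y)+1}$ is exactly the left endpoint of the interval in Theorem~\ref{thm:idleness}, so the reduction $\kappa_{LLY}=\frac{\kappa_p}{1-p}$ is legitimate throughout.
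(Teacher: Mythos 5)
Your proof is correct: I checked the degree formulas, the pointwise domination $\mu_x^p\ge\mu_y^p$ off the layer $V_{k+2}$ (resp.\ $V_{k-1}$), the fact that the transport cost is forced because all deficit sits in a single layer at constant distance from each source, and the final algebraic simplification to \eqref{RadialRoot}, \eqref{RadialInner} and \eqref{Spherical} in both cases $d_x\le d_y$ and $d_x>d_y$. Note, however, that the paper does not prove this theorem at all --- it is imported verbatim from \cite{cushing2018curvature} --- so there is no in-paper argument to compare against; your derivation (choice of idleness $p=\frac{1}{\max(d_x,d_y)+1}$, forced transport into one layer, and the layer-index function as optimal Kantorovich potential) is essentially the computation carried out in that reference.
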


    \subsection{Symmetrical antitrees}
    $$\begin{array}{c}
a \\
a  \quad a \\
a \quad  b \quad a \\
a \quad b \quad b \quad a \\
a \quad b \quad c \quad b \quad a \\
a \quad b \quad c \quad c \quad b \quad a \\
a \quad b \quad c \quad d \quad c \quad b \quad a \\
a \quad b \quad c \quad d \quad d \quad c \quad b \quad a \\
a \quad b \quad c \quad d \quad e \quad d \quad c \quad b \quad a \\
\end{array}
    $$
    We now introduce a particular family of antitrees which are based on the symmetrical triangle above.
    \begin{defin}
    For an antitree of diameter $L\in\mathbb{N}$, the antitree is symmetrical if, for even $L$ we have $|V_k|=|V_{L+2-k}|$ for all $k \in \{1,..,L/2\}$ and for odd $L$ we have $|V_k|=|V_{L+2-k}|$ for all $k \in \{1,..,(L+1)/2\}$. 
    \end{defin}
    In this paper, we only consider the family of symmetrical antitrees with $|V_1|=1$.
    
    Symmetrical antitrees with $L=1$ are complete graphs and so by Theorem \ref{thm:MainRigidityThm} are not Bonnet-Myers sharp. From Theorem \ref{thm:2RigidityThm}, it is trivial to show that all symmetrical antitrees with diameter 2 and $|V_1|=1$ are Bonnet-Myers Sharp. Furthermore, using the formulas above, one can show that the only symmetric antitree with diameter 3 and $|V_1|=1$ that is Bonnet-Myers sharp graph is $\mathcal{AT}(1,3,3,1)$.

    The idea of the following proofs are to force the curvature between nodes in $V_1$ and nodes in $V_2$ to satisfy Bonnet-Myers theorem with equality by choosing appropriate $|V_2|$ and $|V_3|$ and then ensure all other curvatures are greater.

    From Equation \ref{Spherical}, we see that the curvature of spherical edges are always greater then 1. Also, we only need to check half the curvatures since we are considering symmetrical antitrees so the curvature between $V_k$ and $V_{k+1}$ is equivalent to the curvature between $V_{L+2-k}$ and $V_{L+1-k}$ for $k \in \{1,...,L/2\}$, if $L$ is even, and $k \in \{1,...,(L-1)/2\}$, if $L$ is odd.
    \begin{thm}
        Let $(a_k)_{1 \le k \le N}$ be a finite sequence for any odd $N \in \mathbb{N}$ with $a_1=1$. The Symmetrical antitree $G=\mathcal{AT}((a_k))$ is Bonnet-Myers Sharp if and only if all radial edges have curvature equal to $2/L$, where $L$ is the diameter of the graph.  
    \end{thm}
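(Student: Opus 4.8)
The plan is to prove the two implications separately, both resting on a single identity: the radial edge curvatures of a symmetrical antitree with $a_1=1$ always sum to $2$. Let $L=\operatorname{diam}(G)$ and let $V_1,\dots,V_{L+1}$ be the spheres of $G$, and for $1\le k\le L$ let $\rho_k$ denote the common curvature of the radial edges between $V_k$ and $V_{k+1}$ (all such edges have equal curvature by Theorem~\ref{thm:AntitreeCurvatures}). First observe that by \eqref{Spherical} every spherical edge has curvature $\frac{a_{k-1}+a_k+a_{k+1}}{a_{k-1}+a_k+a_{k+1}-1}>1\ge\frac2L$ (using $L\ge2$), so spherical edges neither attain $\inf_{x\sim y}\kappa_{LLY}(x,y)$ nor affect Bonnet--Myers sharpness; only the numbers $\rho_1,\dots,\rho_L$ matter.

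The key step is the identity $\sum_{k=1}^{L}\rho_k=2$. To prove it, set $f_k:=\dfrac{2a_k+a_{k+1}-1}{a_k+a_{k+1}+a_{k+2}-1}$. Using $a_1=1$, formula \eqref{RadialRoot} reads $\rho_1=\frac{a_2+1}{a_2+a_3}=f_1$, and \eqref{RadialInner} reads $\rho_k=f_k-f_{k-1}$ for $2\le k\le L-1$ --- exactly the range of $k$ for which $V_{k-1},V_k,V_{k+1},V_{k+2}$ are all spheres of $G$. The remaining radial edge, between $V_L$ and $V_{L+1}$, I would handle via symmetry: reversing the order of the spheres is an automorphism of a symmetrical antitree, and it carries the radial root edges onto the edges between $V_L$ and $V_{L+1}$, so $\rho_L=\rho_1=f_1$. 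Telescoping gives
\[
\sum_{k=1}^{L}\rho_k = f_1 + \sum_{k=2}^{L-1}(f_k-f_{k-1}) + f_1 = f_1 + f_{L-1},
\]
and substituting the symmetry relations $a_{L+1}=a_1=1$, $a_L=a_2$, $a_{L-1}=a_3$ yields $f_{L-1}=\frac{2a_3+a_2-1}{a_2+a_3}$ and hence $f_1+f_{L-1}=\frac{(a_2+1)+(2a_3+a_2-1)}{a_2+a_3}=2$. (When $L=2$ the middle sum is empty and one uses instead $a_3=a_1=1$, giving $\rho_1=\rho_2=1$ directly.)

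Granting the identity, both directions follow quickly. If every radial edge has curvature $\tfrac2L$, then with the spherical bound above $\inf_{x\sim y}\kappa_{LLY}(x,y)=\tfrac2L=\tfrac{2}{\operatorname{diam}(G)}$, so $G$ is Bonnet--Myers sharp. Conversely, if $G$ is Bonnet--Myers sharp then $\kappa_{LLY}(x,y)\ge\tfrac2L$ for every edge, so in particular $\rho_k\ge\tfrac2L$ for all $k$; since $\sum_{k=1}^{L}\rho_k=2=L\cdot\tfrac2L$, equality must hold termwise, i.e.\ every radial edge has curvature $\tfrac2L$.

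I expect the one delicate point to be the boundary radial edge between $V_L$ and $V_{L+1}$: formula \eqref{RadialInner} does not literally apply there since $V_{L+2}$ is empty, so one must justify $\rho_L=f_1$ either through the reversal automorphism (cleanest) or by recomputing that curvature from the definition with the convention $a_{L+2}=0$ and checking consistency with the telescoping --- equivalently, that $f_L=2$, which is the one place where $a_1=1$ together with the symmetry hypothesis is genuinely used. Everything else is routine manipulation of the closed-form curvatures in Theorem~\ref{thm:AntitreeCurvatures}.
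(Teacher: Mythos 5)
Your proof is correct, and it reaches the conclusion by a genuinely different and cleaner route than the paper. The paper's argument stays at the level of the sphere sizes: it runs an induction from the middle sphere outward, converting each radial-edge inequality $\rho_k\ge 2/L$ via \eqref{RadialInner} into a lower bound on some $a_j$ in terms of the earlier terms, until after $(N-1)/2$ steps it produces $a_3\ge\frac{L+(L-2)a_2}{2}$; this collides with the upper bound $a_3\le\frac{L+(L-2)a_2}{2}$ coming from the root-edge condition $\frac{a_2+1}{a_2+a_3}\ge\frac2L$, and equality is then propagated back through every step. Your telescoping identity $\sum_{k=1}^{L}\rho_k=2$ captures the same underlying cancellation (the inner radial curvature \eqref{RadialInner} is literally a difference $f_k-f_{k-1}$) but works entirely with the curvatures, never the sphere sizes, and replaces the chain of forced equalities by the one-line pigeonhole: $L$ numbers, each at least $2/L$, summing to $2$ must all equal $2/L$. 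Your treatment is also more careful at the one genuinely delicate spot, the radial edges between $V_L$ and $V_{L+1}$, where \eqref{RadialInner} does not literally apply because $V_{L+2}$ is empty; the reversal automorphism (valid because the palindromic condition makes the reversed size sequence identical, and $a_{L+1}=a_1=1$ makes those edges radial root edges) gives $\rho_L=\rho_1=f_1$ rigorously, whereas the paper only gestures at "checking half the curvatures." Two further payoffs of your version: it makes transparent exactly where $a_1=1$ and the symmetry hypothesis are used (they are precisely what force $f_1+f_{L-1}=2$), and it does not use the parity of $N$ anywhere, so it proves the statement for all diameters $L\ge 2$ rather than only even $L$ --- the paper's index bookkeeping, by contrast, is tied to $N$ odd. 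The forward implication (all radial curvatures equal to $2/L$ implies sharpness, since spherical edges have curvature greater than $1\ge 2/L$ by \eqref{Spherical}) is handled identically in both arguments.
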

    \begin{proof}
        We first assume that $G$ is Bonnet-Myers sharp and show that all the radial curvatures are equal to $2/L$.
        
        We prove by induction that for all $n \in \{1,...,(N-1)/2\}$ we have that
        \begin{equation}\label{BoundInequality}
            a_{\frac{N-2n+3}{2}}\geq\frac{L+2n}{L-2n}a_{\frac{N-2n-1}{2}}+\frac{2n}{L-2n}a_{\frac{N-2n+1}{2}}-\frac{2n}{L-2n}.
        \end{equation}
        We derive the base case $(n=1)$ by simply rearranging Equation \ref{RadialInner} to show that
        \begin{equation*}
            a_{\frac{N+1}{2}}\geq\frac{L+2}{L-2}a_{\frac{N-3}{2}}+\frac{2}{L-2}a_{\frac{N-1}{2}}-\frac{2}{L-2}.
        \end{equation*}
        Now, we assume that \ref{BoundInequality} holds for some $k \in \{1,...,(N-3)/2\}$, that is, 
        \begin{equation}\label{Assumption}
            a_{\frac{N-2k+1}{2}}\geq\frac{L+2k}{L-2k}a_{\frac{N-2k-3}{2k}}+\frac{2k}{L-2k}a_{\frac{N-2k-1}{2}}-\frac{2k}{L-2k},
        \end{equation}
        and prove that \ref{BoundInequality} holds for $n=k+1$. Since the graph is Bonnet-Myers sharp, from Equation \ref{RadialInner} we get
        \begin{equation*}
            \frac{2a_{\frac{N-2k-3}{2}}+a_{\frac{N-2k-1}{2}}-1}{a_{\frac{N-2k-3}{2}}+a_{\frac{N-2k-1}{2}}+a_{\frac{N-2k+1}{2}}-1}-\frac{2a_{\frac{N-2k-5}{2(k-1)}}+a_{\frac{N-2k-3}{2}}-1}{a_{\frac{N-2k-5}{2(k-1)}}+a_{\frac{N-2k-3}{2}}+a_{\frac{N-2k-5}{2(k-1)}}-1}\geq\frac{2}{L}.
        \end{equation*}   
           The assumption \ref{Assumption} implies that 
        \begin{equation}\label{Substitution}
            \frac{2a_{\frac{N-2k-3}{2}}+a_{\frac{N-2k-1}{2}}-1}{a_{\frac{N-2k-3}{2}}+a_{\frac{N-2k-1}{2}}+\frac{L+2k}{L-2k}a_{\frac{N-2k-3}{2k}}+\frac{2k}{L-2k}a_{\frac{N-2k-1}{2}}-\frac{2k}{L-2k}-1}-\frac{2a_{\frac{N-2k-5}{2(k-1)}}+a_{\frac{N-2k-3}{2}}-1}{a_{\frac{N-2k-5}{2(k-1)}}+a_{\frac{N-2k-3}{2}}+a_{\frac{N-2k-5}{2(k-1)}}-1}\geq\frac{2}{L},
        \end{equation}
        and by simple rearrangement, we are able to show that \ref{BoundInequality} holds for $n=k+1$, that is
        \begin{equation}\label{FinalInequality}
            a_{\frac{N-2k-1}{2}}\geq\frac{L+2(k-1)}{L-2(k-1)}a_{\frac{N-2k-5}{2(k-1)}}+\frac{2(k-1)}{L-2(k-1)}a_{\frac{N-2k-3}{2}}-\frac{2(k-1)}{L-2(k-1)}.
        \end{equation}
        Thus, \ref{BoundInequality} holds for all $n \in \{1,...,(N-1)/2\}$.
        
        We now focus on the curvature between nodes in $V_2$ and $V_3$, using \ref{BoundInequality} with $n=L/2-1$ we have 
        \begin{equation}
            a_{3}\geq \frac{2L-2}{2}+\frac{L-2}{2}a_2-\frac{L-2}{2} \iff a_{3}\geq \frac{L+(L-2)a_2}{2}.
        \end{equation}
        However, the curvature between nodes in $V_1$ and $V_2$ gives
        \begin{equation}
            \frac{a_2+1}{a_2+a_3}\geq \frac{2}{L} \iff a_{3}\leq \frac{L+(L-2)a_2}{2}.
        \end{equation}
    It is easy to see that \ref{FinalInequality} holds with equality if and only if \ref{Substitution} holds with equality, and since the graph is Bonnet-Myers sharp and \ref{FinalInequality} holds with equality when $n=L/2-1$ all the radial edges have curvature $L/2$. 
    
    If all radial edges are $L/2$ then the graph is Bonnet-Myers sharp as the only other edges are the spherical edges that are greater then 1 by \ref{Spherical}.
    \end{proof}

    \begin{lemma}\label{FourDimAntitress}
        The symmetrical antitree $\mathcal{AT}(1,b,c,b,1)$ is Bonnet-Myers sharp if and only if $c=b+2$. 
    \end{lemma}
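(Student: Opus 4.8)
The plan is to deduce the lemma from the preceding theorem. I would first observe that $\mathcal{AT}(1,b,c,b,1)$ has level sets $V_1,\dots,V_5$ with sizes $a_1=1$, $a_2=b$, $a_3=c$, $a_4=b$, $a_5=1$; it is a symmetrical antitree with $a_1=1$, with odd sequence length $N=5$, and with diameter $L=4$. Hence, by the theorem above, $\mathcal{AT}(1,b,c,b,1)$ is Bonnet--Myers sharp if and only if every radial edge has curvature $2/L=\tfrac12$. Because the antitree is symmetrical, its level-reversing automorphism identifies the curvature on edges between $V_1$ and $V_2$ with that on edges between $V_4$ and $V_5$, and the curvature on edges between $V_2$ and $V_3$ with that on edges between $V_3$ and $V_4$, so only two radial curvatures must actually be computed.

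Next I would read these two curvatures off Theorem \ref{thm:AntitreeCurvatures}. By \eqref{RadialRoot}, the radial root edge between $V_1$ and $V_2$ has curvature $\frac{a_2+1}{a_2+a_3}=\frac{b+1}{b+c}$. For the radial inner edge between $V_2$ and $V_3$ I would apply \eqref{RadialInner} with $k=2$: since $a_1=1$ we have $a_2+a_3+a_4-1=2b+c-1=2a_2+a_3-1$, so the first fraction collapses to $1$ and the curvature becomes $1-\frac{2a_1+a_2-1}{a_1+a_2+a_3-1}=1-\frac{b+1}{b+c}=\frac{c-1}{b+c}$. Note that these two radial curvatures sum to $1$, so one of them equals $\tfrac12$ exactly when both do.

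Finally I would impose the sharpness condition. Solving $\frac{b+1}{b+c}=\tfrac12$ gives $2b+2=b+c$, i.e. $c=b+2$; hence if $G$ is Bonnet--Myers sharp then $c=b+2$. Conversely, if $c=b+2$ then both radial curvatures equal $\frac{b+1}{2b+2}=\tfrac12$, so every radial edge has curvature $2/L$ and the preceding theorem yields that $G$ is Bonnet--Myers sharp (the spherical edges carry curvature $>1$ by \eqref{Spherical}, which is already built into that theorem). I do not expect a genuine obstacle here: the only points that need care are checking that $\mathcal{AT}(1,b,c,b,1)$ really satisfies the hypotheses of the theorem being invoked --- in particular that its diameter is $L=4$, so that $2/L=\tfrac12$ --- and the index bookkeeping when specialising \eqref{RadialInner}. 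If one preferred to avoid the preceding theorem, one could instead compute all curvatures directly from Theorem \ref{thm:AntitreeCurvatures} and observe that $\inf_{x\sim y}\kappa_{LLY}(x,y)=\min\!\left(\frac{b+1}{b+c},\frac{c-1}{b+c}\right)$, which equals $2/L=\tfrac12$ if and only if $c=b+2$.
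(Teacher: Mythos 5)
Your proposal is correct and follows essentially the same route as the paper: both reduce, via the symmetry of $(1,b,c,b,1)$, to the two radial curvatures $\frac{a_2+1}{a_2+a_3}=\frac{b+1}{b+c}$ and $1-\frac{b+1}{b+c}=\frac{c-1}{b+c}$ from Theorem \ref{thm:AntitreeCurvatures}, and set each equal to $2/4$ to obtain $c=b+2$ in both cases. Your observation that the two radial curvatures sum to $1$ (so one equals $\tfrac12$ exactly when both do) is a small but genuine streamlining of the paper's two separate verifications.
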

    \begin{proof}
        We first consider the curvature of the radial root edges. We want the curvature of these edges to be equal $2/4$. This occurs if and only if
        \begin{equation*}
            \frac{b+1}{b+c} = \frac{2}{4} \iff b+2 = c. \\
        \end{equation*}
         We now consider the curvature of the inner radial root edges between $V_2$ and $V_3$ and again we want this to be equal to $2/4$.
        \begin{equation*}
            \frac{2b+c-1}{b+c+b-1}-\frac{b+1}{b+c} = \frac{2}{4} \iff b+2 = c. \\
        \end{equation*} 
        Thus, the symmetrical antitree $(1,b,c,b,1)$ is Bonnet-Myers sharp if and only if $c=b+2$.
    \end{proof}

    \begin{lemma}\label{SixDimAntitress}
        The symmetrical antitree $\mathcal{AT}(1,b,c,d,c,b,1)$ is Bonnet-Myers sharp if and only if $c=2b+3$ and $d=3b+1$.
    \end{lemma}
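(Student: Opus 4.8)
The plan is to invoke the theorem proved just above: the sequence $(a_1,\dots,a_7)=(1,b,c,d,c,b,1)$ has odd length, so that theorem applies, and the antitree $\mathcal{AT}(1,b,c,d,c,b,1)$ has diameter $L=6$. Hence $G$ is Bonnet-Myers sharp if and only if every radial edge has curvature $2/L=1/3$. By the symmetry already recorded for symmetrical antitrees, it suffices to examine the three radial edge types joining $V_1$ to $V_2$, joining $V_2$ to $V_3$, and joining $V_3$ to $V_4$; the remaining three types (joining $V_6,V_7$; $V_5,V_6$; $V_4,V_5$) carry the same curvatures. So the whole argument reduces to evaluating Equations \ref{RadialRoot} and \ref{RadialInner} on this sequence and determining when all three values equal $1/3$, in the spirit of Lemma \ref{FourDimAntitress}.

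First I would handle the radial root edges: Equation \ref{RadialRoot} gives curvature $\tfrac{b+1}{b+c}$, which equals $1/3$ precisely when $c=2b+3$. Next, for the inner radial edges joining $V_2$ and $V_3$ I would use Equation \ref{RadialInner} with $k=2$; the subtracted term is $\tfrac{2a_1+a_2-1}{a_1+a_2+a_3-1}=\tfrac{b+1}{b+c}$, so once $c=2b+3$ is imposed this term already equals $1/3$, and the curvature equals $1/3$ if and only if $\tfrac{2b+c-1}{b+c+d-1}=\tfrac{2}{3}$, which by a one-line rearrangement is equivalent to $d=3b+1$. Finally, for the inner radial edges joining $V_3$ and $V_4$ I would use Equation \ref{RadialInner} with $k=3$; since $a_5=a_3=c$, the first fraction has numerator and denominator both equal to $2c+d-1$ and hence equals $1$, so the curvature is $1-\tfrac{2b+c-1}{b+c+d-1}$, which once more equals $1/3$ exactly when $\tfrac{2b+c-1}{b+c+d-1}=\tfrac{2}{3}$ — the same condition arising from the $V_2,V_3$ edge, hence automatically satisfied once $c=2b+3$ and $d=3b+1$ hold.

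Assembling these computations yields both implications. For the forward direction, Bonnet-Myers sharpness forces the $V_1,V_2$ curvature to be $1/3$, giving $c=2b+3$, and then forces the $V_2,V_3$ curvature to be $1/3$, giving $d=3b+1$. For the converse, substituting $c=2b+3$ and $d=3b+1$ makes each of the three curvatures above equal to $1/3$, so by symmetry all radial edges have curvature $1/3$, and by the preceding theorem $G$ is Bonnet-Myers sharp. I do not anticipate a genuine obstacle here — the proof is a short computation parallel to Lemma \ref{FourDimAntitress}. The only points needing care are the index bookkeeping in Equation \ref{RadialInner} (in particular the shift $k\mapsto k-1$ in the subtracted fraction and the collapse of the first fraction for the $V_3,V_4$ edge) and checking that the $V_3,V_4$ condition is implied by the first two, so that $c=2b+3$, $d=3b+1$ is sufficient and not merely necessary.
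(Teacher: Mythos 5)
Your proposal is correct and follows essentially the same route as the paper: compute the curvature of the radial root edges and of the two types of inner radial edges via Equations \ref{RadialRoot} and \ref{RadialInner}, observe that the first forces $c=2b+3$, the second then forces $d=3b+1$, and the third (where the first fraction collapses to $1$ since $a_5=a_3$) yields the same condition, with symmetry and the preceding theorem handling the remaining edges. Your write-up is in fact slightly more explicit than the paper's about why the $V_3,V_4$ constraint is redundant and about invoking the characterisation theorem for both implications.
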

    \begin{proof}
        We first consider the curvature of the radial root edges. We want the curvature of these edges to be equal to $2/6$. This occurs if and only if
        \begin{equation*}
            \frac{b+1}{b+c} = \frac{2}{6} \iff 2b+3 = c. \\
        \end{equation*}
         We set $c=2b+3$ and consider the curvature of the inner radial root edges between $V_2$ and $V_3$ and again we want this to be equal to $2/6$.
        \begin{equation*}
            \frac{2b+2b+3-1}{b+2b+3+d-1}-\frac{b+1}{b+2b+3} = \frac{2}{6} \iff 3b+1 = d. \\
        \end{equation*} 
        We now consider the curvature of the inner radial root edges between $V_3$ and $V_4$.
                \begin{equation*}
            \frac{2(2b+3)+d-1}{2(2b+3)+d-1}-\frac{2b+2b+2}{b+2b+d+2} = \frac{2}{6} \iff 3b+1 = d. \\
        \end{equation*} 
        Thus, if $c=2b+3$ and $d=3b+1$ then $(1,b,c,d,c,b,1)$ is Bonnet-Myers sharp. 
    \end{proof}

    \begin{lemma}
        There are no symmetrical antitrees that are Bonnet-Myers sharp with $L=2n$ for $n=\{4,5,...,5000\}$.
    \end{lemma}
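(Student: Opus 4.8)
The plan is to combine the theorem proved above characterising Bonnet-Myers sharp symmetrical antitrees (for $N$ odd and $|V_1|=1$, Bonnet-Myers sharpness is equivalent to every radial edge having curvature $2/L$) with the curvature formulas of Theorem \ref{thm:AntitreeCurvatures}, turning the existence question into a one-parameter linear recursion which is then settled by a finite computation. Concretely, write $L=2n$, so $N=2n+1$ is odd and the defining sequence of $G=\mathcal{AT}((a_k))$ is $(a_1,\dots,a_{n+1},a_n,\dots,a_1)$ with $a_1=1$. By symmetry the radial edge between $V_k$ and $V_{k+1}$ has the same curvature as the edge between $V_{2n+1-k}$ and $V_{2n+2-k}$, so it suffices to control $k=1,\dots,n$. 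Putting $c_k=\dfrac{2a_k+a_{k+1}-1}{a_k+a_{k+1}+a_{k+2}-1}$, equation \eqref{RadialRoot} says the radial root edge has curvature $c_1$, equation \eqref{RadialInner} says the edge between $V_k$ and $V_{k+1}$ ($k\ge2$) has curvature $c_k-c_{k-1}$, and since $a_{n+2}=a_n$ by symmetry one has $c_n=1$ automatically. Hence $G$ is Bonnet-Myers sharp if and only if $c_k=k/n$ for $k=1,\dots,n-1$, which rearranges to the recursion
\[
a_{k+2}=\frac{(2n-k)\,a_k+(n-k)(a_{k+1}-1)}{k},\qquad k=1,\dots,n-1,
\]
with $a_1=1$ and $a_2\in\mathbb{Z}_{\ge1}$ the only free parameter. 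Thus a Bonnet-Myers sharp symmetrical antitree of diameter $2n$ exists precisely when some positive integer $a_2$ makes $a_3,\dots,a_{n+1}$, generated by this recursion, all positive integers.

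Because the recursion is affine, $a_k=p_ka_2+q_k$ with $p_k,q_k\in\mathbb{Q}$ determined by $p_1=0,\ q_1=1,\ p_2=1,\ q_2=0$ and $p_{k+2}=\frac{(2n-k)p_k+(n-k)p_{k+1}}{k}$, $q_{k+2}=\frac{(2n-k)q_k+(n-k)(q_{k+1}-1)}{k}$. The coefficients $2n-k$ and $n-k$ are positive for $1\le k\le n-1$, so an easy induction gives $p_k>0$ for $k\ge2$; hence $a_k(a_2)>0$ once $a_2$ is large, and positivity is never the genuine obstruction. Whether $a_k(a_2)\in\mathbb{Z}$ depends only on $a_2$ modulo the denominator of $a_k$, which divides $\operatorname{lcm}(1,2,\dots,n-1)$, so for each fixed $n$ the existence of $G$ is decided by a finite system of congruences on $a_2$ (treat each prime power separately, then combine by the Chinese Remainder Theorem), computable in time polynomial in $n$. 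For $n=2$ and $n=3$ all $p_k$ and $q_k$ are integers, which is exactly why Lemmas \ref{FourDimAntitress} and \ref{SixDimAntitress} yield infinite families; the assertion of the present lemma is that for $4\le n\le5000$ this system has no solution, which I would verify by running the recursion symbolically in $a_2$ for each such $n$.

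In every case in the range the computation in fact exhibits an index $k\le n+1$ with $p_k\in\mathbb{Z}$ but $q_k\notin\mathbb{Z}$, so that $a_k=p_ka_2+q_k$ is never an integer. For instance, when $n\equiv1\pmod3$ one gets $q_5=\frac{n(n^2-2n+3)}{6}$ and $p_5=\frac{(n-1)(n-2)(n+3)}{6}$; here $p_5\in\mathbb{Z}$ for every $n$, while $n(n^2-2n+3)$ is always even but divisible by $3$ only for $n\not\equiv1\pmod3$, so $q_5\notin\mathbb{Z}$ rules out $n\equiv1\pmod3$ uniformly. The main obstacle is that for the other residue classes the obstructing index $k$ — and the prime accounting for it — vary with $n$ (frequently it is the central term $a_{n+1}$, whose denominator involves $n-1$, but not always), and I see no single closed-form argument valid for all $n\ge4$; this is why the statement is restricted to $n\le5000$ rather than asserted for all $n\ge4$. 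The computation itself is routine, requiring only $O(n)$ exact rational (or modular) arithmetic per value of $n$.
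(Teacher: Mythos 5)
Your proposal matches the paper's own argument: the paper likewise reduces sharpness to the requirement that every radial edge have curvature $2/L$, solves iteratively for each $a_i$ as an affine function of $a_2$, and verifies by computer that for each $n$ in the stated range some $a_i$ cannot be an integer (for $L=10$ the paper records $a_6=19a_2+101/4$, whereas your recursion gives $19a_2+25/2$; either way the constant term is non-integral while the coefficient of $a_2$ is an integer, so the conclusion is unaffected). Your write-up is in fact considerably more explicit than the paper's, which simply declares the proof ``trivial'' and appeals to code.
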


    The proof of this lemma is trivial as one can iteratively work out $a_i$ for $i \in \{2,...,(L+1)/2\}$ and see that there does not exist an integer for some $a_i$ such that all radial edge curvatures equal $2/L$. For example, for $L=10$, we need $a_6=19a_2+101/4$. We have wrote code that checks the diameters between 8 and 10,000. This leads us to believe that there are no more symmetrical antitrees with even diameter.
    
    We move our attention to symmetrical antitrees with odd diameter.

    \begin{thm}
        There are no symmetrical antitrees with $|V_1|=1$ that are Bonnet-Myers sharp with diameter 5.
    \end{thm}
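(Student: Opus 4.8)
The plan is to first pin down the combinatorial type of such a graph, then reduce Bonnet--Myers sharpness to a couple of inequalities on the layer sizes, and finally extract an arithmetic contradiction from those inequalities.

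First I would fix the shape of the graph. A finite antitree of diameter $5$ has six layers $V_1,\dots ,V_6$, and with $|V_1|=1$ the definition of a symmetrical antitree forces $|V_6|=|V_1|=1$, $|V_5|=|V_2|$ and $|V_4|=|V_3|$. Writing $b=|V_2|\ge 1$ and $c=|V_3|\ge 1$, every candidate is therefore $G=\mathcal{AT}(1,b,c,c,b,1)$, so it suffices to show that no such $G$ is Bonnet--Myers sharp. Next I would reduce to the radial edges. If $G$ is Bonnet--Myers sharp then $\inf_{x\sim y}\kappa_{LLY}(x,y)=2/5$, so every edge, and in particular every radial edge, has curvature $\ge 2/5$. (Conversely, since all spherical edges have curvature $>1$ by \eqref{Spherical}, the condition ``every radial edge has curvature $\ge 2/5$'' already gives $\inf_{x\sim y}\kappa_{LLY}\ge 2/5$, and Theorem~\ref{thm:DBM} applied to the diameter-$5$ graph $G$ forces equality; but only the easy direction is needed.)

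Using Theorem~\ref{thm:AntitreeCurvatures} with $a_1=a_6=1$, $a_2=a_5=b$, $a_3=a_4=c$ and $L=5$, I would record the radial curvatures: by \eqref{RadialRoot} the root edges $V_1$--$V_2$ and $V_5$--$V_6$ have curvature $\frac{b+1}{b+c}$; by \eqref{RadialInner} the edges $V_2$--$V_3$ and (by symmetry) $V_4$--$V_5$ have curvature $\frac{2b+c-1}{b+2c-1}-\frac{b+1}{b+c}$; and, for the central radial edge $V_3$--$V_4$, applying \eqref{RadialInner} with $k=3$ one notes that both denominators equal $2c+b-1$, so the curvature collapses to $\frac{(3c-1)-(2b+c-1)}{2c+b-1}=\frac{2(c-b)}{2c+b-1}$. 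The crucial observation is the cancellation
$$\frac{b+1}{b+c}+\left(\frac{2b+c-1}{b+2c-1}-\frac{b+1}{b+c}\right)=\frac{2b+c-1}{b+2c-1},$$
that is, the curvature of a root edge plus that of a $V_2$--$V_3$ edge equals $\frac{2b+c-1}{b+2c-1}$. Hence Bonnet--Myers sharpness gives $\frac{2b+c-1}{b+2c-1}\ge \frac45$, which rearranges (using $b+2c-1>0$) to $3c\le 6b-1$, while the central-edge inequality $\frac{2(c-b)}{2c+b-1}\ge \frac25$ rearranges to $3c\ge 6b-1$. Together these force $3c=6b-1$, which is impossible since $3c\equiv 0$ while $6b-1\equiv 2 \pmod{3}$. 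This contradiction completes the argument.

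The step I expect to need the most care is not a hard estimate but the bookkeeping: getting the layer count and the symmetry reduction exactly right, and spotting the cancellation above, which is what makes the proof case-free. Without that observation one can still finish, since the root-edge inequality gives $c\le \frac{3b+5}{2}$ and the central-edge inequality gives $c\ge 2b$, which together force $b\le 5$, leaving only the finitely many pairs $(b,c)$ with $2b\le c\le \frac{3b+5}{2}$ to be checked directly against the $V_2$--$V_3$ curvature.
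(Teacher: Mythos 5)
Your proof is correct, and while the setup is the same as the paper's (reduce to the three radial curvature inequalities for $\mathcal{AT}(1,b,c,c,b,1)$ via Theorem \ref{thm:AntitreeCurvatures} and symmetry, noting that spherical edges have curvature $>1$), your finishing argument is genuinely different and cleaner. The paper derives $c\le\frac{3b+5}{2}$ from the root edges and $c\ge\frac{6b-1}{3}$ from the central edge, concludes $b\le 5$ (it states $b\le 17/6$, which appears to be a typo for $17/3$), and then disposes of the remaining nine integer pairs $(b,c)$ by checking the $V_2$--$V_3$ inequality directly. You instead exploit the telescoping $\frac{b+1}{b+c}+\bigl(\frac{2b+c-1}{b+2c-1}-\frac{b+1}{b+c}\bigr)=\frac{2b+c-1}{b+2c-1}\ge\frac{4}{5}$ to get $3c\le 6b-1$, combine it with $3c\ge 6b-1$ from the central edge, and kill $3c=6b-1$ by reduction mod $3$. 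This eliminates the finite case check entirely and is arguably the better write-up; your fallback remark correctly recovers the paper's case-checking route if one misses the cancellation. The only minor point worth making explicit is that the $V_5$--$V_6$ and $V_4$--$V_5$ edges are handled by the reversal symmetry of $(1,b,c,c,b,1)$, which you and the paper both invoke.
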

    \begin{proof}
        We first consider the curvature of the radial root edges. We want the curvature of these edges to be equal or  greater then $2/5$. This happens iff
        \begin{equation}\label{inequality1}
            \frac{b+1}{b+c} \geq \frac{2}{5} \iff \frac{3b+5}{2} \geq c. \\
        \end{equation}
         We now consider the curvature of the inner radial root edges between $V_2$ and $V_3$ and again we want this to be greater then $2/5$.
        \begin{equation}\label{inequality2}
            \frac{2b+c-1}{b+2c-1}-\frac{b+1}{b+c} \geq \frac{2}{5} \\
        \end{equation} 
        We now consider the curvature of the inner radial root edges between $V_3$ and $V_4$ and again we want this to be greater then $2/5$.
        \begin{equation}\label{inequality3}
            \frac{3c-1}{2c+b-1}-\frac{2b+c-1}{b+2c-1} \geq \frac{2}{5} \iff \frac{6b-1}{3} \leq c. \\
        \end{equation} 
        Thus, we need 
        \begin{equation}\label{inequality4}
            \frac{6b-1}{3}-\frac{3b+5}{2}\geq 0 \iff b \leq \frac{17}{6} \implies b\leq 5 
        \end{equation}
        and one can easily check that the finite number of combinations (9 combinations) of $b$ and $c$ that satisfy the inequalities \ref{inequality1},\ref{inequality3} and \ref{inequality4} do not satisfy \ref{inequality2}.
    \end{proof}

    For odd $L>5$, the number of inequalities that need to be satisfied increases, we believe that the only symmetrical antitree with $|V_1|=1$ and odd diameter is $\mathcal{AT}(1,3,3,1)$.


    \section{Summary of Bonnet-Myers sharp graphs classification}
    In \cite{Kamtue}, Kamtue classified regular Bonnet-Myers sharp graphs of diameter $3$:
    \begin{thm}[see \cite{Kamtue}]\label{thm:PhilRigidityThm}
  Regular Bonnet-Myers sharp graphs of diameter $3$ are precisely the following graphs:
  \begin{enumerate}
  \item the cube $Q^3$;
  \item the Johnson graph $J(6,3)$;
  \item the demi-cubes $Q^{6}_{(2)}$; 
  \item the Gosset graph.
  \end{enumerate}
\end{thm}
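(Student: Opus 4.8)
The plan is to reduce the statement to the self-centered case, which Theorem~\ref{thm:MainRigidityThm} already resolves. One direction is immediate: $Q^3$, $J(6,3)$, $Q^6_{(2)}$ and the Gosset graph are regular of diameter $3$ and Bonnet--Myers sharp by Theorem~\ref{thm:MainRigidityThm}. For the converse the substantive step is to prove that a regular Bonnet--Myers sharp graph of diameter $3$ is self-centered; granting that, Theorem~\ref{thm:MainRigidityThm} applies and it only remains to list its diameter-$3$ members. Since a Cartesian product has diameter equal to the sum of the diameters of its factors, a diameter-$3$ graph produced by that theorem is either one of the listed diameter-$3$ building blocks $Q^3$, $J(6,3)$, $Q^6_{(2)}$, Gosset (note that the cocktail party graphs in item (2) always have diameter $2$), or a nontrivial product whose factor-diameters sum to $3$. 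In the latter case the factor-diameters are $(1,1,1)$ or $(1,2)$; any diameter-$2$ factor with ratio ${\rm deg}/{\rm diam}\neq 1$ is a cocktail party graph $CP(n)$ ($n\ge 3$), which is incompatible under the condition ${\rm deg}(G_i)/{\rm diam}(G_i)={\rm deg}(G_j)/{\rm diam}(G_j)$ with the ratio $1$ of $Q^1$, so every factor is $Q^1$ or $Q^2$ and the product is $\cong Q^3$, already on the list. Hence the four graphs above are exactly the diameter-$3$ members.

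To prove self-centeredness I would argue by contradiction. Let $G$ be $d$-regular, Bonnet--Myers sharp of diameter $3$ (so $\inf_{x\sim y}\kappa_{LLY}(x,y)=2/3$), and suppose some vertex $x$ has eccentricity $2$. Work with the distance decomposition $S_0=\{x\}$, $S_1=N(x)$, $S_2=V\setminus(S_0\cup S_1)$; $d$-regularity gives $|S_1|=d$, and every vertex of $S_2$ has a neighbour in $S_1$. Because ${\rm diam}(G)=3$, there is a pair of vertices at distance $3$ with at least one endpoint in $S_2$, so the graph cannot collapse onto $S_0\cup S_1$. I would then localize to a suitable edge $u\sim v$ near such a far vertex and bound $\kappa_{LLY}(u,v)$ from above, in the manner of Lemmas~\ref{lemma:MassMoved}--\ref{lemma:DifferentDegree}: choose the idleness $p\in\big[\tfrac{1}{\max(d_u,d_v)+1},1\big]$ allowed by Theorem~\ref{thm:idleness}, identify the vertices that must gain mass when transporting $\mu^p_u$ to $\mu^p_v$, and certify the resulting lower bound on $W_1(\mu^p_u,\mu^p_v)$ by an explicit $1$-Lipschitz potential through Kantorovich duality (Theorem~\ref{Kantorovich}). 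The ``one-sided'' neighbours produced by the three-layer structure, together with the layer sizes forced by regularity, should make this transport cost exceed $p$, yielding an edge with $\kappa_{LLY}<2/3$ and contradicting sharpness. Hence every vertex is a pole, and Theorem~\ref{thm:MainRigidityThm} finishes the argument.

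The main obstacle is exactly this curvature estimate. With two distance layers, as in the diameter-$2$ lemmas, optimal transport between $\mu^p_x$ and $\mu^p_y$ is essentially one-dimensional and easy to control; with three layers the excess mass created on one side of an edge can be routed through $S_2$ in several competing ways, and one must first determine which distances among the relevant vertices are realized by edges rather than by paths of length $2$ before the cost bound becomes rigorous. The real labour is a finite case analysis on the position of the distance-$3$ pair relative to $S_0,S_1,S_2$ and on the layer memberships and degrees of its neighbours, each case ending in an excess-cost contradiction. An alternative strategy would bypass Theorem~\ref{thm:MainRigidityThm} altogether, showing first that $G$ must be a distance-regular graph of diameter $3$ with an antipodal (``tight'') intersection array and then matching it against the known classification of such arrays; but that trades the transport computations for heavier combinatorial and spectral input, so the self-centeredness reduction seems the more economical route.
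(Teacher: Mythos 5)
The paper does not actually prove this statement; it is quoted from \cite{Kamtue}, so there is no internal proof to compare against. That said, your overall strategy --- show that a regular Bonnet--Myers sharp graph of diameter $3$ is automatically self-centered, then invoke Theorem~\ref{thm:MainRigidityThm} and sift out its diameter-$3$ members --- is the strategy of the cited reference, and your handling of the Cartesian-product case is correct: the admissible factor-diameter patterns $(1,2)$ and $(1,1,1)$ force every factor to have ${\rm deg}/{\rm diam}=1$ (ruling out $CP(n)$, $n\ge 3$, whose ratio is $n-1\ge 2$), hence every factor is $Q^1$ or $Q^2$ and the product is again $Q^3$.

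The genuine gap is that the one substantive step, self-centeredness, is announced but not proved. Your sketch --- fix a vertex $x$ of eccentricity $2$, decompose into layers $S_0,S_1,S_2$, and locate an edge whose transport cost exceeds the allowed bound --- does not transfer from the diameter-$2$ setting, because sharpness at diameter $3$ only gives $\kappa_{LLY}\ge 2/3$ rather than $\ge 1$. The analogue of Lemma~\ref{lemma:MassMoved} then reads $W_1(\mu^p_x,\mu^p_y)\le \tfrac{1}{3}+\tfrac{2}{3}p$, a much weaker constraint: an edge may have several ``one-sided'' neighbours, and excess mass may legitimately travel distance $2$, so no single local configuration produces an immediate contradiction, and the ``finite case analysis'' you defer to is not finite in any obvious sense without further global structure. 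This is exactly why \cite{Kamtue} is a substantial paper: it first develops structural results about poles, antipoles, and transport along geodesics in Bonnet--Myers sharp graphs before eccentricity $2$ can be excluded. As written, your argument establishes the easy inclusion and the reduction, but leaves the heart of the theorem unproved.
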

    This result agrees with the diameter 3 graphs from Theorem \ref{thm:MainRigidityThm}. This leads to the following conjecture
    \begin{conj}
    For regular graphs the self-centeredness condition in Theorem \ref{thm:MainRigidityThm} can be dropped.
    \end{conj}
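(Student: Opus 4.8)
The plan is to prove the equivalent assertion that \emph{every regular Bonnet--Myers sharp graph is self-centered}; Theorem~\ref{thm:MainRigidityThm} then supplies the classification with the self-centeredness hypothesis removed. Fix a regular Bonnet--Myers sharp $G$ with common degree $d$ and $L={\rm diam}(G)$, so $\inf_{x\sim y}\kappa_{LLY}(x,y)=2/L$. The cases $L=2$ and $L=3$ are already covered by Theorem~\ref{thm:CPRigidityThm} and Theorem~\ref{thm:PhilRigidityThm}, so the substance is $L\ge 4$, and the goal reduces to showing that the set $P$ of poles equals $V$.

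The first ingredient is the local rigidity that is available around the poles. If $x\in P$ has antipole $y$ and $x=z_0\sim z_1\sim\cdots\sim z_L=y$ is a geodesic, then the transport argument underlying Theorem~\ref{thm:DBM} forces $\kappa_{LLY}(z_i,z_{i+1})=2/L$ for every $i$: any strict inequality on such an edge would shorten the bound on $d(x,y)$ below $L$. For a $d$-regular graph the value $\kappa_{LLY}(u,w)$ on an edge is a function of purely local data (common neighbours, triangles and $4$-cycles through $uw$), so the equality $\kappa_{LLY}=2/L$ constrains the combinatorial link of each $z_i$. The structure theory of \cite{CushR} is precisely the analysis of these constraints globally under the standing assumption that \emph{every} vertex is a pole; the task is to recover enough of it without that assumption.

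Next I would attempt to propagate the pole property. Given any $v\in V$, choose a pole $x$ and a geodesic from $x$ to $v$; since adjacent vertices satisfy $\kappa_{LLY}\ge 2/L$ and the curvature of an edge couples the $1$-balls around its endpoints, one hopes to show inductively that the link of $v$ is isomorphic to the link of $x$, so that $v$ inherits a vertex at distance $L$ and hence $v\in P$. Connectedness then gives $V=P$ and the result follows from Theorem~\ref{thm:MainRigidityThm}. A complementary line, worth developing in parallel, is a global one: prove a Lichnerowicz-type eigenvalue inequality for regular Bonnet--Myers sharp graphs and show that sharpness forces $\kappa_{LLY}=2/L$ on a set of edges dense enough to trigger the local rigidity throughout.

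The main obstacle is the tension between the local character of the curvature formula and the global character of the constant $2/L$: away from the poles one only knows $\kappa_{LLY}\ge 2/L$, and this slack is genuine (the spherical edges of the antitree examples and the edges $v_iv_j$ of $G(a,b)$ in Proposition~\ref{prop:IsBMSharp} have curvature strictly above the bound), so the link of a non-pole vertex need not be rigid and the propagation step may fail as stated. Breaking this would require either a quantitative ``bounded drift'' estimate controlling how much a neighbourhood can change across one edge, or the global counting argument above. I would first test the whole strategy in diameter $4$, where the admissible pieces ($Q^4$, $J(8,4)$, $Q^8_{(2)}$ and the products $CP(n)\times CP(n)$) form a short enough list for a computer-assisted verification of the propagation step; success there would be good evidence that the argument generalizes.
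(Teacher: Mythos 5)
This statement is stated in the paper as a \emph{conjecture}: the paper offers no proof, and to the best of my knowledge the question remains open. So the relevant comparison is not with a proof in the paper but with the standard of a complete argument, and your proposal does not meet it --- as you yourself concede in your final paragraph. What you have is a reasonable research programme, not a proof.

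Concretely, the gap is the propagation step. Your first ingredient is sound: if $x$ is a pole with antipole $y$, then running the telescoping inequality $L-2(1-p)\le W_1(\mu^p_x,\mu^p_y)\le\sum_i W_1(\mu^p_{z_i},\mu^p_{z_{i+1}})\le L\bigl(1-(1-p)\tfrac{2}{L}\bigr)$ along a diametral geodesic forces $\kappa_{LLY}(z_i,z_{i+1})=2/L$ on every edge of that geodesic; this is indeed a lemma in \cite{CushR}. But this rigidity is only available on edges lying on some diametral geodesic, and for a vertex $v$ not on any such geodesic you have nothing beyond $\kappa_{LLY}\ge 2/L$ on its incident edges --- slack which, as you note, is genuinely realised in known sharp examples. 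The claim that ``the link of $v$ is isomorphic to the link of $x$'' is therefore unsupported: equality of curvature on one edge constrains, but does not determine, the isomorphism type of the $1$-balls at its endpoints, and even full knowledge that every edge has curvature exactly $2/L$ would not by itself yield self-centeredness without a further argument. Appealing to ``the structure theory of \cite{CushR}'' here is circular, since that theory is developed precisely under the hypothesis you are trying to remove. The alternative ``Lichnerowicz-type'' route is likewise only named, not executed. In short: the reduction to ``every regular Bonnet--Myers sharp graph is self-centered'' is correct and is the right way to think about the conjecture, and the diameter $2$ and $3$ cases are indeed settled by Theorems \ref{thm:CPRigidityThm} and \ref{thm:PhilRigidityThm}, but for $L\ge 4$ the central step is missing and no proof has been given.
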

     
    In this article we classified the class of diameter 2 Bonnet-Myers sharp graphs and introduced examples of non-regular Bonnet-Myers sharp graphs of arbitrary. However, unlike in the regular case, we do not have a conjecture on the complete list of Bonnet-Myers sharp graphs.
    \begin{question}
    Which non-regular graphs of diameter at least 3 are Bonnet-Myers sharp?
    \end{question}
In summary a classification of Bonnet-Myers sharp graphs exists under the following conditions on a graph $G$
\begin{itemize}
    \item $G$ is regular of degree $D$ and one of the following holds:
        \begin{itemize}
            \item diam$(G) = 2,$ see \cite{CushR};
            \item diam$(G) = 3,$ see \cite{Kamtue};
            \item diam$(G) \geq D,$ see \cite{CushR};
            \item $G$ is self-centred see \cite{CushR};
        \end{itemize}
    \item diam$(G) = 2,$ see Theorem \ref{thm:2RigidityThm};
    \item G is a symmetrical antitree and on of the following holds:
        \begin{itemize}
            \item $G=\mathcal{AT}(1,3,3,1)$;
            \item diam$(G) = 4,$ see Lemma \ref{FourDimAntitress};
            \item diam$(G) = 6,$ see Lemma \ref{SixDimAntitress}.
        \end{itemize}
\end{itemize}

{\bf{Acknowledgements:}} David Cushing is supported by the Leverhulme Trust Research Project Grant number RPG-2021-080. 

We like to thank Supanat Kamtue and Norbert Peyerimhoff for stimulating discussions on this topic.

\end{document}